\theoremstyle{plain}
\newtheorem{theorem}{Theorem}[section]
\newtheorem{proposition}[theorem]{Proposition}
\newtheorem{lemma}[theorem]{Lemma}
\newtheorem{corollary}[theorem]{Corollary}
\theoremstyle{definition}
\newtheorem{example}[theorem]{Example}
\theoremstyle{remark}
\newtheorem*{remark}{Remark}
\newcommand{\theoremref}[1]{\hyperref[#1]{Theorem~\ref*{#1}}}
\newcommand{\lemmaref}[1]{\hyperref[#1]{Lemma~\ref*{#1}}}
\newcommand{\definitionref}[1]{\hyperref[#1]{Definition~\ref*{#1}}}
\newcommand{\propositionref}[1]{\hyperref[#1]{Proposition~\ref*{#1}}}
\newcommand{\conjectureref}[1]{\hyperref[#1]{Conjecture~\ref*{#1}}}
\newcommand{\corollaryref}[1]{\hyperref[#1]{Corollary~\ref*{#1}}}
\newcommand{\exampleref}[1]{\hyperref[#1]{Example~\ref*{#1}}}
\newcommand{\R}{\mathbb{R}}
\newcommand{\C}{\mathbb{C}}
\newcommand{\Z}{\mathbb{Z}}
\newcommand{\F}{\mathscr{F}}
\newcommand{\D}{\mathscr{D}}
\newcommand{\V}{\mathcal{V}}
\newcommand{\strucsheaf}{\mathcal{O}}
\newcommand{\proj}{\mathbb{P}}
\DeclareMathOperator{\sheafhom}{\mathcal{H\kern -3pt o\kern -2pt m\kern -1pt}}
\newcommand{\IM}{\operatorname{Im}}
\newcommand{\defeq}{\mathrel{\mathop:}=}
\newcommand{\gr}{\operatorname{gr}}
\newcommand\sbullet[1][.5]{\mathbin{\vcenter{\hbox{\scalebox{#1}{$\bullet$}}}}}
\subjclass[2020]{Primary 14D07; Secondary 14F10, 14F17.}
\title[Nefness of lowest piece]{On nefness of the lowest piece of Hodge modules}
\author[Ze Yun]{Ze Yun}
\address{Department of Mathematics, Stony Brook University}
\email{ze.yun@stonybrook.edu}
\date{\today}
\begin{document}

\maketitle
% \newpage
% \tableofcontents
\begin{abstract}
    We give degree lower bounds for quotient line bundles of the lowest piece of a Hodge module induced by a complex variation of Hodge structures outside a simple normal crossing divisor, beyond the unipotent variation case. This note aims to explain the failure of nefness when the monodromies at infinity are not unipotent. The lower bounds depend on local monodromies at infinity and intersection numbers with the boundary divisors. In particular it recovers Kawamata's semi-positivity theorem for unipotent variations. The proof is algebraic via a vanishing theorem for twisted Hodge modules. We also give geometric examples to show that the lower bound can be achieved. 

\end{abstract}

\section{Introduction}
	\indent \indent The first nonzero step of the Hodge filtration of a variation of Hodge structures, equipped with its Hodge metric, has semi-positive curvature by the work of Griffiths. A common example is given by $R^j f_*\omega_{X/S}$ for a smooth projective surjective map $f: X\rightarrow S$, and positivity of higher direct images of canonical bundle are very useful in algebraic geometry. Kollár studied them in \cite{HigherdirectimagesI} \cite{HigherdirectimagesII}, and showed many good properties of them (torsion-freeness, vanishing theorem, etc.) even when the family is not smooth. He conjectured that for a variation of Hodge structure, there should exist a sheaf analogous to $R^jf_*\omega_{X/S}$ as above, and still has some good properties as $R^if_*\omega_{X/S}$. Saito gave an answer to Kollár's conjecture using the theory of mixed Hodge modules \cite{SaitoKollarconjecture}. The analogous sheaf for a variation of Hodge structures $\mathcal{V}$ is the lowest piece of Hodge filtration of the Hodge module induced by $\mathcal{V}$. In this paper, we study the nefness of these sheaves.
    
    On curves, the lowest piece of a Hodge module is always a nef vector bundle, which is proved using asymptotic behavior of Hodge metrics in \cite{fujitasemipositivity}, \cite{zucker}, and \cite{peters}. In the case of SNC discriminant with unipotent monodromy along the discriminant divisor, Kawamata \cite[Theorem 5]{Kawamata1981} proved the lowest piece of Hodge filtration of the canonical extension of a variations of Hodge structures across the divisor is a nef vector bundle. In both of their proofs, the analytic results in \cite{cks} about the asymptotics of the Hodge metric on the canonical extension, and the curvature properties of the Hodge bundle are essential. Our more algebraic argument instead, is to use Saito's vanishing theorem to prove nefness and study the failure of nefness in the general case.
	
	In dimensions greater than or equal to two, the example in the work of Fujino and Fujisawa \cite[example 7.9]{fujinovmhs} shows nefness fails in general. We would like to measure how much nefness fails for the lowest piece, by giving degree lower bounds for quotient line bundles of this sheaf pullbacked to curves. The proof of the following theorem is algebraic, by using Saito's vanishing theorem for twisted Hodge modules on curves, and doesn't need analysis of the asymptotics of Hodge metric. 
	
	Let $X$ be a compact complex manifold, $D=\sum_i D_i$ is a SNC divisor on $X$, and $\mathcal{V} $ is a polarized variation of Hodge structures on $X\backslash D$. For a fixed $i$, the real numbers (rational in the case of quasi-unipotent monodromies) $-1<\beta_{ij}\leq 0$ are eigenvalues of residues of the Deligne canonical extension of $\mathcal{V}$ along $D_i$. Let $M$ be the Hodge module with strict support on $X$ induced by $\mathcal{V}$, and $F_p M$ denote the first nonzero step of Hodge filtration of $M$, i.e., $F_{p-1}M=0$ and $F_p M\neq 0$.
	
		\begin{theorem}
		Given $C$ a smooth projective curve, and a map $f: C\rightarrow X$, 
		\begin{itemize}
			\item[(a)] If $\IM(C)\nsubseteq D$, then $f^* F_p M$ is a nef vector bundle on $C$.

			\item[(b)] If $\IM(C)\subset D_{(k)}\defeq D_1\cap \cdots \cap D_k$, but the image of $C$ doesn't lie in any $(k+1)$ intersections of boundary divisors, and $f^* F_p M\rightarrow P$ is a quotient line bundle, then 
			$$\deg P \geq \min_{\text{over all possible }j_i} \big\{ \sum_{i=1}^k (- \beta_{ij_i}\cdot \deg f^* \strucsheaf_X(D_i)) \big\}.$$
	
		\end{itemize}

		\end{theorem}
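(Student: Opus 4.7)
The plan is to prove part (a) by reducing to the classical curve case via pullback compatibility of the lowest Hodge piece, and to prove part (b) by decomposing $F_pM|_{D_{(k)}}$ according to the eigenvalues of local monodromy along each $D_i$, identifying each summand as the lowest Hodge piece of a twisted Hodge module on $D_{(k)}$. The minimum over the $j_i$ appearing in the stated bound then arises naturally from ranging over the summands of this decomposition.

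For part (a), when $\IM(C)\not\subset D$, the preimage $f^{-1}(D)\subset C$ is a finite set, and on its complement $f^*\V$ is a polarized VHS. Let $M_C$ be the associated pure Hodge module on $C$. I would compare $f^*F_pM$ and $F_pM_C$ locally at each point $c\in f^{-1}(D)$: the monodromy of $f^*\V$ near $c$ is the ramification-index-th power of the monodromy of $\V$ at $f(c)$, and a direct V-filtration calculation shows that $f^*F_pM$ and $F_pM_C$ agree up to torsion supported at points. Since $F_pM_C$ is nef by the classical theorems of Fujita, Zucker, and Peters, and torsion does not obstruct nefness on a smooth curve, $f^*F_pM$ is nef.

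For part (b), I work near a generic point of $\IM(C)$, in local coordinates with $D_i=\{z_i=0\}$ for $1\leq i\leq k$. Along each $D_i$, $M$ carries a V-filtration whose graded pieces are indexed by the residue eigenvalues $\beta_{ij}\in(-1,0]$. The main technical input, which I would establish by iterating the V-filtration along $D_1,\ldots,D_k$, is a structural decomposition
\[
F_pM\big|_{D_{(k)}}\;\cong\;\bigoplus_{(j_1,\ldots,j_k)}(F_pM)^{\vec\beta},\qquad (F_pM)^{\vec\beta}\cong F_pM^{\vec\beta}\otimes\strucsheaf_{D_{(k)}}\Big(-\sum_{i=1}^k\beta_{ij_i}D_i\Big),
\]
where $\vec\beta=(\beta_{1j_1},\ldots,\beta_{kj_k})$ and $M^{\vec\beta}$ is the pure Hodge module on $D_{(k)}$ obtained from the iterated nearby cycles of $M$ along $D_1,\ldots,D_k$. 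The fractional twist $-\sum_i\beta_{ij_i}D_i$ is precisely where twisted Hodge modules enter.

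Given this decomposition, a quotient $f^*F_pM\twoheadrightarrow P$ has nonzero composition with the projection to the $(j_1,\ldots,j_k)$-summand for at least one choice of $(j_1,\ldots,j_k)$, yielding a nonzero map $f^*F_pM^{\vec\beta}\otimes f^*\strucsheaf_X(-\sum_i\beta_{ij_i}D_i)\to P$. Since $\IM(C)\not\subset D_{(k+1)}$, the image of $C$ in $D_{(k)}$ is not contained in the singular locus of $M^{\vec\beta}$, and a twisted analogue of part (a) (proved using Saito's vanishing theorem for twisted Hodge modules, as alluded to in the introduction) shows $f^*F_pM^{\vec\beta}$ is nef on $C$. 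Therefore $\deg P\geq\sum_{i=1}^k(-\beta_{ij_i})\deg f^*\strucsheaf_X(D_i)$, and minimizing over the $j_i$ gives the claim. The main obstacle is establishing the structural decomposition and pinning down the exact fractional twist $-\sum\beta_{ij_i}D_i$: this is a delicate several-variable V-filtration computation, and the identification of each summand as the lowest Hodge piece of a twisted Hodge module on $D_{(k)}$ is where the twisted Hodge module formalism becomes indispensable.
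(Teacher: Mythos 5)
Your part (a) is essentially the paper's argument: pull back the generic VHS to $C$, extend to a Hodge module $M_C$, and transfer nefness via a generically isomorphic comparison map $F_pM_C\to f^*F_pM$. The paper cites \cite[Lemma 2.6]{bakker2023hodgetheoreticproofhwangstheorem} for that map rather than a bare-hands $V$-filtration calculation, but the substance is the same.

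For part (b), the central claim in your proposal --- that
$F_pM|_{D_{(k)}}$ splits as a \emph{direct sum} indexed by $(j_1,\ldots,j_k)$ --- does not hold, and this is a genuine gap. What the $V$-filtration along $D_1$ gives is a finite increasing \emph{filtration} of $F_pN|_{D_1}$ by the subsheaves $F_pV_{\alpha}N/F_pV_{<-1}N$, whose graded quotients are $F_p\gr^V_\alpha N$; these graded pieces are then further filtered by the weight filtration $W(N)$, and only the subquotients $\gr^W_\ell\gr^V_\alpha N$ carry twisted Hodge module structures. None of these extensions are split in general. The paper handles this with the elementary Lemma ``bounding degrees of filtrations'': if a coherent sheaf on a curve is filtered, any quotient line bundle has degree bounded below by the minimum over the graded pieces. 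Your ``project to the $(j_1,\ldots,j_k)$-summand'' step needs to be replaced by exactly this filtration argument: a quotient $f^*F_pM\twoheadrightarrow P$ either vanishes on the bottom step of the filtration (hence factors through a higher graded piece) or restricts nontrivially to it, and one inducts. Without that, the ``at least one projection is nonzero'' step has nothing to project to.

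A secondary point: the isomorphism $(F_pM)^{\vec\beta}\cong F_pM^{\vec\beta}\otimes\strucsheaf_{D_{(k)}}(-\sum_i\beta_{ij_i}D_i)$ with non-integral $\beta_{ij_i}$ does not literally make sense as a tensor by a line bundle; you clearly know this (you note that the twisted formalism is ``indispensable''), but as written this cannot be the technical input. The honest statement is that the graded pieces are lowest pieces of $(\alpha_1\strucsheaf(D_1),\ldots,\alpha_k\strucsheaf(D_k))$-twisted Hodge modules in the sense of Schnell and Davis, and the twist shows up in the degree bound through the twisted Saito vanishing theorem $H^1(C,F_pN\otimes P^{-1})=0$ when $\deg P < \sum_i\alpha_i\deg f^*L_i$, not through any genuine line bundle. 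You should also be aware of two further points the paper must address that your sketch elides: (i) the right-to-left $\D$-module conversion introduces factors of $K_X$, $K_{D_{(k)}}$, which is where the shift from $\alpha_i$ to $1+\alpha_i = -\beta_{ij_i}$ comes from in the final bound; and (ii) the compatibility of iterated $V$-filtrations along $D_1,\ldots,D_k$ (so that the indices really are the residue eigenvalues) is nontrivial and in the paper is justified via the nilpotent orbit theorem.
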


    When the monodromies are unipotent, we have all $\beta_{ij}$ equal to zero, this implies $F_p M$ is a nef vector bundle on $X$. Therefore our result is a generalization of the semi-positivity theorem of Kawamata \cite{Kawamata1981}, see also \cite{fujinovmhs}, \cite{MR3167580}, \cite{semipo} for discussions. Unexpectedly, the theorem also implies if the boundary divisors are nef, then $F_p M$ is nef. Our main result also explains some numeric failure of nefness, for example the one given in \cite[example 7.9]{fujinovmhs}. See section 4 for a detailed exposition and section 6 for some other examples.
    
    \begin{corollary}[\cite{Kawamata1981}]
        In the same setting as above, if the monodromies at infinity are unipotent then the lowest piece $F_pM$ of the Hodge module is nef.
    \end{corollary}

    \begin{corollary}
        In the same setting as above, if each component $D_i$ of $D$ is a nef divisor (for example $X$ is a compact complex torus), then the lowest piece $F_pM$ of the Hodge module is nef.
    \end{corollary}

    In \cite{pps2017}, when $X$ is a complex torus, they proved via generic vanishing theory that $F_p M$ is nef, which agrees with our result since any effective divisor on a complex torus is nef. In fact, they proved a stronger result, the lowest piece on a complex torus is generated by global sections after some finite isogeny. In that case, $F_p M$ carries a smooth metric with semi-positive curvature, hence is nef.

    Another outcome of our method is the following proposition. A general complete intersection curve should lie outside the boundary divisor and nefness holds true for these kind of curves, by the first part of the main theorem. Our result says nefness still holds even if it's a complete intersection lying inside the boundary divisor, which seems very surprising. The proof of the following proposition is completely different with the case when the complete intersection curve is generic (hence lie outside of $D$). Note that we also don't assume $D$ is simple normal crossing in the following proposition.

\begin{proposition}
	On a smooth projective variety $X$ of dimension $n$, a Hodge module $M$ induced by a generic VHS on $X$, then given a map from a curve $f:C\rightarrow X$, and the image of $C$ is a smooth complete intersection curve, $f^*F_p M$ is a nef coherent sheaf on $C$. 
\end{proposition}

	As suggested by the main result, the failure of nefness comes from the locus where the Hodge module is not smooth, which is also shown by the counter examples in \cite[example 7.9]{fujinovmhs}. For curves intersecting the open set underlying the variations of Hodge structures, it can be derived from an argument in \cite[Lemma 2.6]{bakker2023hodgetheoreticproofhwangstheorem} that the pullback of the lowest piece is still a nef vector bundle on the curve. For curves contained in the boundary divisor, we need to restrict the lowest piece to $D_i$ and hope to do some kind of induction. The problem is directly restricting $F_p M$ to $D_i$ doesn't correspond to an object directly from a Hodge module on $D_i$. Our method relies on the notion of twisted Hodge modules \cite{schnell2024highermultiplierideals} \cite{davis2025unitaryrepresentationsrealgroups}, which naturally arises when we specialize a Hodge module to divisors. In principle, when the monodromy at infinity is quasi-unipotent, our result could be proved using a branched cover. But our method is more direct and works for complex variation of Hodge structures which could be even not quasi-unipotent, and is a nice application of twisted Hodge modules. The lower bound is sharp in general, as shown by some geometric examples. 
	
	In this paper, all the complex variations of Hodge structures are polarized, and they are just abbreviated as VHS. Canonical extension of a VHS will mean Deligne's canonical extension with eigenvalues of residues in the interval $(-1,0]$. Nefness of a coherent sheaf $\F$ is defined by the nefness of its relative $\strucsheaf(1)$ on $\proj(\F)$, or equivalently if for every map $f:C\rightarrow X$ from a smooth projective curve, and every quotient line bundles $f^* \F \rightarrow L$, $\deg L\geq 0$. $M$ stands for a left $\D$-module, whereas the letter $N$ stands for a right $\D$-module; the lowest piece of Hodge filtration of a (left) Hodge module $M$ is always denoted by $F_p M$; the corresponding lowest piece for the associated right Hodge module is denoted by $F_p N$, which is equal to $F_p M\otimes K_X$, where $X$ is the support of the Hodge module.

    In section 2, we give a brief introduction to twisted Hodge modules and explain the relation of them with our problem. In section 3, we argue that on curves, nefness is due to Saito's vanishing theorem, and it gives a taste on how we prove the main theorem. Section 4 is devoted to an explanation of a counter-example from \cite{fujinovmhs}, using our main result. In section 5, we prove the main theorem and give some corollaries. The last section gives some examples to show that our lower bound can be achieved.
%\newpage

\section*{Acknowledgment}

I would like to thank my advisor Christian Schnell for suggesting to me this problem, and for his constant support and encouragement. Our weekly meetings, which always run overtime, are valuable to me. I also want to thank Zhengze Xin and Dingchang Zhou for various discussions on the problem.

\section{Twisted Hodge modules and nearby cycles}
One key ingredient of our proof is the notion of twisted Hodge modules \cite{schnell2024highermultiplierideals} \cite{davis2025unitaryrepresentationsrealgroups}. In our situation, they arise naturally when taking nearby cycles $\gr^W\gr^V M$ along a divisor, where $V$ is the $V$-filtration, and $W$ is the monodromy filtration on $\gr^V M$. Let $L$ be a line bundle on a complex manifold $X$, $\alpha\in \R$. The ring of $\alpha L$-twisted differential operators is a quasi-coherent sheaf on $X$ which are locally isomorphic to the usual sheaf of differential operators,
$$\D_{X,\alpha L}|_{U} \cong \D_U[t\partial_t]/(t\partial t -\alpha) \cong \D_U,$$
but with a different gluing to a sheaf on $X$. The transition map between differential operators under trivializations $\phi, \phi': L|_U\rightarrow U\times \C$ is
$$t\partial_t\rightarrow t'\partial_{t'},\quad \partial_j\mapsto \partial_j + \alpha \cdot g^{-1}\frac{\partial g}{\partial x_j},$$
where $\phi' \circ \phi^{-1}: U \times\mathbb{C} \rightarrow  U \times \mathbb{C}$ has the form $(x,t)\mapsto (x,g(x)t)$ for some invertible holomorphic function $g$ on $U$. 

In our case, if $D$ is a divisor, and $L=\strucsheaf_X(D)$, then one result in \cite{schnell2024highermultiplierideals} says twisted Hodge modules arise as nearby cycles of Hodge modules.

\begin{proposition}[Proposition 3.12 of \cite{schnell2024highermultiplierideals}]
	 Let $\left(\mathcal{M}, F_{\bullet} \mathcal{M}\right)$ be a polarized complex Hodge module on $X$. For any real number $\alpha \in[-1,0)$ and any integer $\ell \in \mathbb{Z}$, the object

		$$ \left(\operatorname{gr}_{\ell}^{W(N)} \operatorname{gr}_\alpha^V \mathcal{M}, F_{\bullet-1} \operatorname{gr}_{\ell}^{W(N)} \operatorname{gr}_\alpha^V \mathcal{M}\right)
		$$
		is a polarized $\alpha L$-twisted Hodge module on $X$. The object
		$$ \left(\operatorname{gr}_{\ell}^{W(N)} \operatorname{gr}_0^V \mathcal{M}, F_{\bullet} \operatorname{gr}_{\ell}^{W(N)} \operatorname{gr}_0^V \mathcal{M}\right)
		$$
		is a polarized complex Hodge module (without any twisting).
\end{proposition}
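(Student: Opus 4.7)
The plan is to construct the $\D_{X,\alpha L}$-module structure by gluing from local models, and to verify that the transition cocycle arising from the $V$-filtration matches the one defining $\D_{X,\alpha L}$.

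I would first work on an open cover $\{U_i\}$ of $X$ such that each $D \cap U_i$ is cut out by a function $t_i$, giving a trivialization $\phi_i\colon L|_{U_i} \cong U_i \times \C$. On such a $U_i$, the complex $V$-filtration of $\mathcal{M}$ along $t_i$ exists, and the standard nearby-cycle theorem of Saito (in the complex Hodge module framework) asserts that $(\operatorname{gr}_\ell^{W(N)}\operatorname{gr}_\alpha^V \mathcal{M}, F_{\bullet - 1})$ for $\alpha \in [-1, 0)$, respectively $(\operatorname{gr}_\ell^{W(N)}\operatorname{gr}_0^V \mathcal{M}, F_{\bullet})$ for $\alpha = 0$, is a polarized complex Hodge module on $U_i$ supported in $D \cap U_i$. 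The shift by $-1$ in the Hodge filtration reflects the usual Tate-twist convention for nearby cycles. I would take these local assertions as inputs from the existing theory.

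The main content is then verifying the gluing. On an overlap $U_i \cap U_j$ write $t_i = g_{ij} t_j$ with $g_{ij}$ invertible and independent of the fiber coordinate. Since the $V$-filtration is intrinsic, the graded piece $\operatorname{gr}_\ell^{W(N)}\operatorname{gr}_\alpha^V \mathcal{M}$ is canonically well-defined; only the action of the base vector fields $\partial_{x_k}$ depends on the trivialization. A direct chain-rule computation yields
\[
\partial_{x_k}^{(t_j)} = \partial_{x_k}^{(t_i)} + g_{ij}^{-1}\frac{\partial g_{ij}}{\partial x_k}\cdot t_i\partial_{t_i}.
\]
On $\operatorname{gr}_\alpha^V \mathcal{M}$ the operator $t_i\partial_{t_i}$ acts as $\alpha + N$, and $N$ vanishes on $\operatorname{gr}_\ell^{W(N)}$. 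Hence on the double graded piece the transition reduces to
\[
\partial_{x_k}^{(t_j)} = \partial_{x_k}^{(t_i)} + \alpha\,g_{ij}^{-1}\frac{\partial g_{ij}}{\partial x_k},
\]
which is exactly the cocycle defining $\D_{X,\alpha L}$. The local $\D$-module structures therefore assemble into a $\D_{X,\alpha L}$-module on $X$, and for $\alpha = 0$ the correction vanishes and one recovers an ordinary complex Hodge module. The Hodge filtration and the polarization (the latter built from the polarization of $\mathcal{M}$ together with powers of $N$) are constructed from operations intrinsic to the $V$- and $W$-filtrations, so they glue under the same cocycle.

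I expect the main obstacle to lie in two places: first, in setting up the weight and filtration-shift conventions carefully enough that the polarizing sesquilinear pairing is compatible with the twisted structure (the polarization on $\operatorname{gr}_\ell^{W(N)}\operatorname{gr}_\alpha^V \mathcal{M}$ involves a contraction by $N^\ell$, and checking its pairing properties against the shifted Hodge filtration requires bookkeeping); and second, for irrational $\alpha$, in invoking the full $\R$-indexed $V$-filtration machinery for complex Hodge modules. Both are handled by the formalism of the cited references, but need to be quoted precisely.
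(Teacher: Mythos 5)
The paper never proves this proposition; it is imported verbatim from \cite{schnell2024highermultiplierideals} and used as a black box, so there is strictly speaking no ``paper's own proof'' to compare against. That said, your sketch is consistent with the mechanism that makes the cited result true, and it aligns with the way the paper itself introduces twisted differential operators in Section~2: the transition cocycle
\[
\partial_{x_k}^{(t_j)} = \partial_{x_k}^{(t_i)} + g_{ij}^{-1}\frac{\partial g_{ij}}{\partial x_k}\, t_i\partial_{t_i},
\]
combined with the observation that $t\partial_t$ acts as $\alpha$ plus nilpotent on $\operatorname{gr}^V_\alpha$ and that $N$ induces the zero endomorphism of $\operatorname{gr}^{W(N)}_\ell$ (since it shifts the weight by $-2$), is exactly the origin of the $\alpha L$-twist, and your observation that the correction term vanishes for $\alpha=0$ correctly explains the untwisted case. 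Two points deserve tightening if this were expanded into a full proof. First, the $V$-filtration here is the one along the graph embedding $\gamma_t\colon X\hookrightarrow X\times\C$, so ``$t_i\partial_{t_i}$ acts on $\operatorname{gr}^V_\alpha$'' should be phrased for $\gamma_{t,*}\mathcal{M}$; the identification of the fiber coordinate of $L=\strucsheaf_X(D)$ with the graph coordinate is implicit and should be spelled out, since that is precisely what makes the cocycle you compute coincide with the defining cocycle of $\D_{X,\alpha L}$. Second, you correctly flag the polarization and filtration-shift bookkeeping as the remaining work; in particular the shift $F_{\bullet-1}$ for $\alpha\in[-1,0)$ versus $F_\bullet$ for $\alpha=0$ comes from the different normalizations of the Hodge filtration on $\psi_{t}$ versus $\psi_{t,1}$ in Saito's theory, and this needs to be matched carefully against the chosen left/right $\D$-module convention (the paper works with right modules for the twisted theory, per its Section~4 remark). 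So: a reasonable and essentially correct sketch of the content of the cited proposition, but the paper does not itself contain such a proof, and the residual verifications you list are real.
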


The relation of twisted Hodge modules with our problem is the following. Suppose the boundary divisor is $D=\sum D_i$ which is SNC. Consider the $V$-filtration on $D_1$. By \cite[exercise 11.2]{overview}, for a right $\D$-module which underlies a Hodge module, $F_p N= F_p V_{<0}N$. Then 
		$$F_p N|_{D_1} = \frac{F_p N}{t\cdot F_p N} = \frac{ F_p V_{<0}N}{t\cdot F_p V_{<0}N} = \frac{ F_p V_{<0}N}{F_pV_{<-1}N},$$
		hence $F_p N|_{D_1} $ is filtered by coherent subsheaves 
		$$\frac{F_pV_{\alpha_1}N}{F_p V_{<-1}N},\cdots,\frac{F_pV_{\alpha_k}N}{F_p V_{<-1}N},$$
		where $-1\leq \alpha_1<\cdots<\alpha_k<0$ are related to eigenvalues of residues along $D_1$. Then the nefness of $F_p N|_{D_1}$ is related to the nefness of $F_p \gr^V N$ and $F_p \gr^W\gr^V N$.

Now we explain the another new ingredient of the proof. In the proof of the main theorem, we need to specialize a Hodge module to intersections of divisors for several times. This raises the question: what's the meaning of taking nearby cycles for a (twisted) Hodge module. After some thought, one realizes that all the properties \cite[Definition 2.1]{schnell2024highermultiplierideals} of $V$-filtration along a local analytic hypersurface still makes sense for twisted Hodge modules, since these properties are preserved under the formula of transition between differential operators when changing trivializations of $L$. Therefore, the the graded quotient of $V$-filtration and nearby cycles of an $\alpha L$-twisted Hodge module along another divisor still makes sense, and should be understood as a Hodge module that is twisted twice. Keep arguing this way, we can iteratively take nearby cycles of a Hodge module along components of a simple normal crossing divisor, and what we get is an object that can be called a multiply twisted Hodge module. To compare with the terminology of \cite{davis2025unitaryrepresentationsrealgroups}, the thing that is called an $(\alpha_1 L_1,\cdots , \alpha_k L_k)$-twisted Hodge module in this paper is called an $\lambda$-twisted Hodge module in their setting, where $\lambda=(\alpha_1,\cdots,\alpha_k)\in \mathfrak{h}^*$, $\mathfrak{h}$ is the Lie algebra of the torus $(\C^*)^k$, and the $(\C^*)^k$ torsor is $\tilde{X}\defeq \Pi_{i=1}^k (L_i\backslash X)$.

By applying nearby cycles for twisted Hodge modules, we can reduce the dimensions of supports of our (twisted) Hodge modules. Let $L,Q$ be two line bundles. To illustrate, doubly twisted $\D$-modules should locally look like
$$\D_{X,\alpha L,\beta Q}|_{U} \cong \D_U[t\partial_t,s\partial_s]/(t\partial_t -\alpha,s\partial_s-\beta) \cong \D_U,$$
their transitions under a simultaneous trivialization of $L$ and $Q$ should be 
$$t\partial_t\rightarrow t'\partial_{t'},\quad s\partial_s\mapsto s'\partial_{s'}, \quad \partial_j\mapsto \partial_j + \alpha \cdot g^{-1}\frac{\partial g}{\partial x_j}+ \beta\cdot h^{-1}\frac{\partial h}{\partial x_j}, $$
where $g$ and $h$ are transition functions for $L$ and $Q$.

\section{Lowest piece of twisted Hodge modules on curves}
\indent\indent First we prove lowest piece of an untwisted Hodge module of pure support on a smooth projective curve $C$ is always a nef vector bundle, using Saito's vanishing theorem. This is a simple exercise on applications of vanishing theorems, but it shows how our argument works. The advantage of using vanishing theorem is everything is formulated algebraically, so that we don't need to do careful analysis about the growth of Hodge metric as in \cite{fujitasemipositivity} \cite{zucker} \cite{peters}. 

\begin{theorem}\label{nefness on curves}
	Let $M$ be a Hodge module of pure support on a smooth projective curve $C$, $F_p M$ is its lowest piece, then $F_p M$ is a nef vector bundle.
\end{theorem}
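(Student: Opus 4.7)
The plan is to deduce nefness of $F_p M$ from Saito's vanishing theorem via a short cohomological contradiction argument on the curve. Since $M$ has pure support equal to $C$, the lowest piece $F_p M$ is torsion-free over $\strucsheaf_C$ and hence a vector bundle. By the standard criterion, nefness of $F_p M$ is equivalent to the condition that for every finite map $\pi\colon C'\to C$ from a smooth projective curve and every quotient line bundle $\pi^* F_p M \twoheadrightarrow L$ on $C'$, one has $\deg L \geq 0$. Because a finite surjection of smooth curves is flat, and pullback of a pure Hodge module along a flat morphism is a pure Hodge module whose Hodge filtration is compatible with pullback, I can replace $(C,M)$ by $(C',\pi^* M)$ and reduce to the following assertion on $C$ itself: every line bundle quotient $F_p M \twoheadrightarrow L$ satisfies $\deg L \geq 0$.

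The key input is Saito's vanishing theorem applied to the lowest piece. Because $F_{p-1} M = 0$, the graded piece $\gr^F_{p-1} \mathrm{DR}(M)$ of the filtered de Rham complex collapses to the single term $\omega_C \otimes F_p M$ in degree $0$. Saito's theorem then yields, for every ample line bundle $A$ on $C$,
$$H^1\bigl(C,\,\omega_C\otimes F_p M\otimes A\bigr) = 0.$$

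Now I argue by contradiction: suppose there is a quotient $F_p M \twoheadrightarrow L$ with $\deg L < 0$, so that $A:=L^{\vee}$ is ample on $C$. Tensoring the surjection with the locally free sheaf $\omega_C\otimes A$ preserves surjectivity, and since $H^2(C,-)=0$ on a curve, the long exact sequence in cohomology produces a surjection
$$H^1\bigl(C,\,\omega_C\otimes F_p M\otimes A\bigr) \twoheadrightarrow H^1\bigl(C,\,\omega_C\otimes L\otimes A\bigr).$$
The left-hand side vanishes by Saito vanishing, while the right-hand side equals $H^1(C,\omega_C)\cong\C$ by Serre duality, because $L\otimes A = \strucsheaf_C$. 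This contradiction forces $\deg L\geq 0$.

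The step I expect to require the most care is the bookkeeping of filtration and shift conventions needed to justify the identification $\gr^F_{p-1}\mathrm{DR}(M)\simeq \omega_C\otimes F_p M$ and the resulting precise form of Saito vanishing it feeds into; once this is nailed down the remainder is a one-line cohomology computation. A secondary point worth acknowledging is that the reduction to $C$ itself relies on pullback of pure Hodge modules along finite flat maps preserving the Hodge filtration, a standard but nontrivial input from Saito's theory.
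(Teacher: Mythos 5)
Your cohomological argument on $C$ itself is essentially the paper's proof: the paper phrases it via Serre duality (a nonzero map $F_p M\to P$ gives a nonzero class in $H^1(C, F_p M\otimes K_C\otimes P^{-1})^*$, which must vanish by Saito vanishing if $\deg P<0$), while you run the same vanishing through a long exact sequence; these are the same computation.

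The gap is in the reduction to quotient line bundles on $C$ itself. You assert that for a finite flat map $\pi\colon C'\to C$, the pullback $\pi^* M$ is again a pure Hodge module with $F_p(\pi^*M)=\pi^* F_p M$. This is false when $\pi$ ramifies over a point where $M$ has nontrivial local monodromy. Concretely, for a rank-one variation with residue $-\tfrac12$ at a point of $C$ and $\pi$ given by $y\mapsto y^2$ there, the pulled-back connection has residue $-1$, so the canonical (Deligne) extension of the pulled-back variation is a \emph{strict} sub-line-bundle of $\pi^*(\text{canonical extension})$, with cokernel supported at the ramification point; the naive $\mathscr{D}$-module pullback is not even the intermediate extension. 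So one cannot ``replace $(C,M)$ by $(C',\pi^*M)$.'' The paper's argument avoids this by pulling back only the \emph{generic} variation of Hodge structures to $C'$, extending it to a Hodge module $M_{C'}$ on $C'$, and then using the existence of a comparison map $F_p M_{C'}\to \pi^* F_p M$ that is merely a \emph{generic isomorphism} (\cite[Lemma 2.6]{bakker2023hodgetheoreticproofhwangstheorem}, not an equality). Any quotient line bundle of $\pi^*F_p M$ then receives a nonzero map from $F_p M_{C'}$, and the degree bound follows from the argument on $C'$. With the reduction step repaired along these lines, your proof coincides with the paper's.
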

\begin{proof}
	 Let $F_p M\rightarrow P$ be a quotient line bundle on $C$. Then it can be viewed as a nonzero element in $H^0(C, F_pM^*\otimes P)\cong H^1(C,F_p M\otimes K_C\otimes P^{-1})^*$, by Saito's vanishing theorem, if $\deg P<0$, then $H^1(C,F_p M\otimes K_C\otimes P^{-1})=0$, a contradiction. Therefore $\deg P\geq 0$. 
	 
	 In general if $g: D\rightarrow C$ is a map between smooth projective curves, we first pullback the generic variations of Hodge structures on $C$ to $D$, and extend it as a Hodge module $M_D$ on $D$. Then there exists a map $F_p M_D\rightarrow g^* F_p M$ which is generically an isomorphism \cite[Lemma 2.6]{bakker2023hodgetheoreticproofhwangstheorem}. By the discussion above, the quotient line bundles of $g^* F_p M$ will also have non-negative degrees. Therefore $F_p M$ is a nef vector bundle on $C$. 
\end{proof}

\begin{remark}
	$F_p M$ has a trivial quotient, if and only if the local system of the complex variation of Hodge structures on a Zariski open set of $C$ has a trivial summand, such that the intersection of  trivial summand with the lowest piece is non-zero. This is implied by \cite[section 2]{bakker2023hodgetheoreticproofhwangstheorem}. 
	
	In other words, the lowest degree quotient is achieved only when the variation has a fixed part in the lowest piece.
\end{remark}

If we have a vanishing theorem for twisted Hodge modules, then we could also argue as in Theorem \ref{nefness on curves}, and the argument will also work for real twistings. This indeed works thanks to a generalization of Saito's vanishing theorem \cite[Theorem 4.7]{schnell2024highermultiplierideals} \cite[Theorem 4.2]{davis2025unitaryrepresentationsrealgroups}, and the above theorem is still true, even with $\R$-twistings (although with $\R$-twistings, the equality will never be achieved). 

\begin{theorem}\label{thm:curvetwistingtheorem}
	Let $\alpha\in \R$, and $M$ be an $(\alpha_1 L_1,\cdots,\alpha_k L_k)$-twisted Hodge module with pure support on $C$. For any map of smooth projective curves $f:C_1\rightarrow C$, and any line bundle quotient $f^* F_p M\rightarrow P$, $$\deg P\geq \sum_{i=1}^k \alpha_i\cdot\deg f^* L_i .$$
\end{theorem}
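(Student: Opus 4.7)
The plan is to extend the proof of Theorem~\ref{nefness on curves} directly, replacing the ordinary Saito vanishing theorem with the twisted analogue \cite[Theorem 4.7]{schnell2024highermultiplierideals}, \cite[Theorem 4.2]{davis2025unitaryrepresentationsrealgroups}. The numerical threshold $\deg P\geq 0$ that appeared there gets shifted to $\deg P\geq \sum_i \alpha_i \deg f^*L_i$ precisely because of the twisting correction built into the ampleness hypothesis of the vanishing statement.

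First I would reduce to the case $f=\id_C$. Given $f\colon C_1\to C$, pull back the generic polarized VHS underlying $M$ from its smooth locus in $C$ and extend the result to a multiply-twisted Hodge module $M_{C_1}$ on $C_1$ with twisting data $(\alpha_1 f^*L_1,\ldots,\alpha_k f^*L_k)$. The transition formulas for the rings $\D_{X,\alpha_i L_i}$ written down in Section~2 are functorial under pullback, so $M_{C_1}$ carries the expected twisting by pulled-back line bundles. By the argument of \cite[Lemma 2.6]{bakker2023hodgetheoreticproofhwangstheorem}, adapted to the twisted setting, there should be a natural comparison map $F_p M_{C_1}\to f^*F_p M$ that is generically an isomorphism, so any line bundle quotient of $f^*F_p M$ pulls back to one of $F_p M_{C_1}$ of the same degree.

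Assuming $f=\id$, a quotient $F_p M\to P$ is a nonzero global section of $F_p M^*\otimes P$, and by Serre duality on the smooth projective curve $C$,
$$H^0(C, F_p M^*\otimes P)\cong H^1(C, F_p M\otimes K_C\otimes P^{-1})^*.$$
The twisted Saito vanishing theorem, applied to the $(\alpha_1 L_1,\ldots,\alpha_k L_k)$-twisted Hodge module $M$ with line bundle $B=P^{-1}$, gives $H^1=0$ whenever $B$ is sufficiently ample relative to the twisting; on a curve this translates, with the sign convention of the cited theorems, into the numerical condition $\deg P < \sum_i \alpha_i \deg L_i$. Non-vanishing of the Serre dual section therefore forces $\deg P\geq \sum_i \alpha_i \deg L_i$, which is the claim.

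The main obstacle I expect is the functoriality in the first step: verifying carefully that pulling back a multiply-twisted Hodge module along a map of smooth projective curves indeed produces a multiply-twisted Hodge module with the twisting pulled back as expected, and that the comparison of \cite[Lemma 2.6]{bakker2023hodgetheoreticproofhwangstheorem} continues to furnish a generically iso map $F_p M_{C_1}\to f^*F_p M$ in the twisted category. A secondary bookkeeping task is to track the sign conventions in the twisted vanishing theorem so that the ampleness hypothesis translates into exactly the inequality as stated.
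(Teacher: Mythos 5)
Your proposal follows the paper's own proof almost verbatim: Serre duality turns a quotient line bundle into a nonzero $H^1$ class, the twisted Saito vanishing theorem forces $H^1=0$ below the threshold $\sum_i\alpha_i\deg L_i$, and the general case reduces to $f=\id_C$ by pulling back the generic twisted VHS and invoking the generically-isomorphic comparison map of \cite[Lemma 2.6]{bakker2023hodgetheoreticproofhwangstheorem}. The only difference is cosmetic (you do the pullback reduction first, the paper second), so this is the same argument.
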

\begin{proof}
	The proof is basically the same as the untwisted case. First let $F_p M\rightarrow P$ be a quotient line bundle on $C$. Suppose $-\deg P +\sum_{i=1}^k \alpha_i\cdot\deg L_i >0$, but by the vanishing theorem \cite[Theorem 4.2]{davis2025unitaryrepresentationsrealgroups}, $H^1(C,F_p M\otimes P^{-1}\otimes K_C)=0$, a contradiction. 
	
	In general, given a map $f: C_1\rightarrow C$, we want to consider quotient line bundles $f^* F_p M\rightarrow P$. We can first pullback the generic twisted VHS to $C_1$, extend it as an $(\alpha_1 f^*L_1,\cdots,\alpha_k f^* L_k)$-twisted Hodge module $M'$ on $C_1$, then there exists a map $F_p M'\rightarrow f^* F_p M$ which is generically an isomorphism as in the untwisted case. The conclusion follows by the above paragraph.
\end{proof}

\section{A counter-example}
\indent\indent We state the result in terms of left $\D$-modules, but during the proofs we work with right $\D$-modules. This is because the twistings for twisted Hodge modules are more natural for right $\D$-modules. Throughout the remaining texts, $M$ will always mean a left $\D$-module, and the corresponding right $\D$-module $M\otimes_{\strucsheaf_X}\omega_X$ will be denoted by $N$. 

We will first state our main result, then show that it explains the failure of nefness in the example of \cite[Example 7.9]{fujinovmhs}. $X$ will be a compact complex manifold, $D=\sum_i D_i$ is a SNC divisor on $X$, $\mathcal{V} $ is a polarized variations of Hodge structures on $X\backslash D$. Recall the Deligne canonical extension with eigenvalues of residues in $(-1,0]$, denoted by $\mathcal{V}^{>-1}$, is a vector bundle with logarithmic connection, and eigenvalues of residues are real numbers in the interval $(-1,0]$. If we assume quasi-unipotent monodromy, then the eigenvalues of residues are in fact rational numbers in $(-1,0]$. The Hodge module $M$ extending the VHS $\mathcal{V}$ is a $\D$-module $M=\D_X\cdot \mathcal{V}^{>-1}$ generated by the canonical extension, and the lowest piece $F_p M$ is equal to $j_* F_p \mathcal{V} \cap \mathcal{V}^{>-1}$, where $j: X\backslash D\rightarrow X$ is the inclusion. Let $-1 < \beta_k<\cdots< \beta_1 \leq 0$ be eigenvalues of residues of the canonical extension $\tilde{\V}^{>-1}$ along a component $D_1$ of the SNC divisor $D$.

\begin{theorem}
		Given a smooth projective curve $C$, and a map $f: C\rightarrow X$, 
		\begin{itemize}
			\item[(a)] If $\IM(C)\nsubseteq D$, then $f^* F_p M$ is a nef vector bundle on $C$.

			\item[(b)] If $\IM(C)\subset D_{(k)}\defeq D_1\cap \cdots \cap D_k$, but the image of $C$ doesn't lie in any $(k+1)$ intersections of boundary divisors, and $f^* F_p M\rightarrow P$ is a quotient line bundle, then 
			$$\deg P \geq \min_{\text{over all possible }j_i} \big\{ \sum_{i=1}^k (- \beta_{ij_i}\cdot \deg f^* \strucsheaf_X(D_i)) \big\}.$$
	
		\end{itemize}

		\end{theorem}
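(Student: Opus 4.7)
For part~(a), the plan is to reduce directly to Theorem~\ref{nefness on curves}. Let $C^{\circ}=f^{-1}(X\setminus D)$, which is dense in $C$ by assumption. Pull back $\mathcal{V}$ to $C^{\circ}$ and extend to a polarized Hodge module $M_C$ on $C$; by \cite[Lemma~2.6]{bakker2023hodgetheoreticproofhwangstheorem} there is a natural morphism $F_pM_C\to f^*F_pM$ that is an isomorphism over $C^{\circ}$. A quotient line bundle $f^*F_pM\to P$ then pulls back to a nonzero morphism $F_pM_C\to P$; its image is a line bundle quotient of $F_pM_C$ of degree at most $\deg P$, and by Theorem~\ref{nefness on curves} that degree is nonnegative.

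For part~(b), the strategy is to restrict $F_pM$ iteratively to $D_1,D_1\cap D_2,\ldots,D_{(k)}$ using the $V$-filtration. Working with the right $\D$-module $N=M\otimes\omega_X$, the identification reviewed in Section~2 equips $F_pN|_{D_1}$ with a finite filtration whose graded pieces $F_p\gr_\alpha^V N$ are indexed by the jumps $\alpha$ of the $V$-filtration, in bijection with the residue eigenvalues $\beta_{1j_1}$ along $D_1$. Proposition~2.1 then identifies each graded piece, up to a Hodge-filtration shift, with the lowest piece of an $\alpha_{j_1}L_1$-twisted Hodge module on $D_1$, where $L_1$ is naturally built from $\strucsheaf_X(D_1)|_{D_1}$ and the twisting parameter $\alpha_{j_1}$ corresponds to $\beta_{1j_1}$ under the standard normalization of the $V$-filtration. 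Iterating along $D_2,\ldots,D_k$ via the multiply-twisted $V$-filtration formalism sketched at the end of Section~2 produces a filtration on $F_pN|_{D_{(k)}}$ whose graded pieces $G_{(j_1,\ldots,j_k)}$, indexed by tuples of residue eigenvalues, are lowest pieces of $(\alpha_{j_1}L_1,\ldots,\alpha_{j_k}L_k)$-twisted Hodge modules supported generically on $D_{(k)}$.

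Because $f(C)$ avoids every deeper stratum, each $G_{(j_1,\ldots,j_k)}$ is generically smooth along $f(C)$, so the argument of part~(a) applied in the twisted setting extends the pulled-back multi-twisted VHS to a multi-twisted Hodge module on $C$, and Theorem~\ref{thm:curvetwistingtheorem} bounds $\deg Q\ge\sum_i(-\beta_{ij_i})\deg f^*\strucsheaf_X(D_i)$ for any quotient line bundle $Q$ of $f^*G_{(j_1,\ldots,j_k)}$. Given $f^*F_pM\to P$, viewed as a quotient of $f^*F_pN|_{D_{(k)}}$ modulo an $\omega_X$-twist absorbed into the Hodge-filtration indexing, the pulled-back filtration induces a filtration of $P$; since $P$ is a line bundle on a smooth curve, exactly one graded piece $P_{i_0}$ of this filtration has generic rank one (a sub line bundle of $P$ realized as a line-bundle quotient of some $f^*G_{(j_1^{(0)},\ldots,j_k^{(0)})}$), while the rest are torsion of nonnegative degree. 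It follows that $\deg P\ge\deg P_{i_0}\ge\sum_i(-\beta_{ij_i^{(0)}})\deg f^*\strucsheaf_X(D_i)\ge\min_{j_1,\ldots,j_k}\sum_i(-\beta_{ij_i})\deg f^*\strucsheaf_X(D_i)$, as claimed.

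The hard part will be the careful bookkeeping through the $k$ iterations: tracking the Hodge-filtration shifts at each application of Proposition~2.1, pinning down the exact correspondence between $V$-filtration indices $\alpha\in[-1,0)$ and residue eigenvalues $\beta_{ij}\in(-1,0]$ with the correct sign, and absorbing the left/right $\D$-module conversion so the final bound matches the statement exactly. A secondary point is justifying the existence of the multi-twisted Hodge module extending the pulled-back multi-twisted VHS across the finitely many points of $C$ where $f(C)$ meets a deeper stratum or the VHS degenerates, which should follow from canonical-extension theory adapted to the twisted setting.
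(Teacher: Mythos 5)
Your proposal matches the paper's argument: part (a) is proved identically via \cite[Lemma 2.6]{bakker2023hodgetheoreticproofhwangstheorem} and Theorem~\ref{nefness on curves}, and part (b) follows the same strategy of filtering $F_p N$ restricted to $D_{(k)}$ by the $V$-filtration, identifying the graded pieces as lowest pieces of multiply twisted Hodge modules via Proposition 2.1 (iterated nearby cycles), and invoking Theorem~\ref{thm:curvetwistingtheorem} together with the left--right canonical-bundle shift $K_{D_{(k)}}\otimes K_X^{-1}\cong\bigotimes_i\strucsheaf_X(-D_i)|_{D_{(k)}}$ to convert $\alpha_i$ into $1+\alpha_i=-\beta_{ij_i}$. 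The only stylistic difference is that where you push the induced filtration onto $P$ and isolate the single rank-one graded piece, the paper packages the same observation as Lemma~\ref{bounding degrees of filtrations} ($d(\F)\geq\min_i d(\gr^L_i\F)$) and applies it directly.
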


% To illustrate the idea, we first prove our theorem in the surface case. In this section, we give a lower bound on line bundle quotients of lowest piece of the Hodge module when restricted to curves, in terms of local monodromy and intersection numbers with boundary divisors. Our formula exactly recovers Kawamata's result on nefness when the monodromy is unipotent. Combined with the following example in \cite[Example 7.9]{fujinovmhs}, this shows that our estimate is optimal in the surface case.
	
\begin{example}[\cite{fujinovmhs}]
	Consider the projective bundle $X=\proj(\strucsheaf\oplus \strucsheaf(2))$ on $\proj^1$, and $\pi:X\rightarrow \proj^1$. This projective bundle has two sections, whose images are called $E$ and $G$, a fiber is called $F$. A simple computation shows $E^2=-2$, $G^2=2$, $E\cdot F=G\cdot F=1$, $E\cdot G=0$. Since $2(E+F) \sim E+G$ are linearly equivalent, we define $L\defeq \strucsheaf_X(E+F)$, then we can construct a degree two cover $f:\tilde{X}\rightarrow X$, ramified along $E+G$. This cover induces a VHS $\mathcal{V}$ on $X\backslash (E\cup G)$, whose lowest piece of its right Hodge module extension $N$ is $f_*\omega_{\tilde{X}}$. The corresponding lowest piece of the left Hodge module is $f_*\omega_{\tilde{X}/X}$.
	
	Since $f_*\omega_{X'/X} = \strucsheaf_X\oplus L$, $\deg(L|_E)=-1$, then $F_p M$ is not nef on $X$. 
\end{example}
	The monodromy matrices along $E$ and $G$, are
	$ \begin{pmatrix}
	0 & 1 \\
	1 & 0 
	\end{pmatrix}  $,
	whose eigenvalues are $1,-1$. A logarithm of it divided by $2\pi i$ is
	$ \begin{pmatrix}
	0 & 0 \\
	0 & -\frac{1}{2} 
	\end{pmatrix}  $.  Then the $V$-filtration of $M$ along $E$ or $G$ has indices jumping at $0$ and $-\frac{1}{2}$. Since $E^2=-2$, $\beta_1=0$, $\beta_2=-\frac{1}{2}$, we have
	$$\min\{-\beta_1\cdot E^2,   -\beta_2\cdot E^2 \}=-1.$$ 
	This shows our estimate is sharp for surfaces. In a later section more examples of this type will be given, and show that our lower bound can be achieved in general.

	One could try to argue as previous classical works using curvature properties of Hodge metric on $F_p M$, which is a singular Hermitian metric with semi-positive curvature. However, the problem comes from the possible appearance of nonzero Lelong numbers of the metric along the boundary divisor. In the setting of \cite{semipo}, when the VHS is unipotent, the Lelong numbers are zero everywhere, then by the current approximation theorem of Demailly, it can be shown that the lowest piece is nef. The metric growth of canonical extension of $\mathcal{V}$ is controlled by the indices of the $V$-filtration, hence the Lelong numbers of the Hodge metric on $F_p M$ are controlled by these indices.  But in the general case of quasi-unipotent monodromy, it seems that the jumping indices of the $V$-filtration, or the nonzero Lelong numbers of the Hodge metric are responsible for the failure of nefness. Our theorem may therefore also be proved using analytic arguments. Nonetheless, our method is algebraic, and suggests the usefulness of twisted Hodge modules when specializing to divisors.

\section{Bounding failure of nefness of the lowest piece}
\indent \indent In this section, as before, we state the results in terms of left $\D$-modules, and during the proofs we use right $\D$-modules. $X$ will be a compact complex manifold, $D=\sum_i D_i$ is a SNC divisor on $X$, $\mathcal{V} $ is a polarized variations of Hodge structures on $X\backslash D$. For convenience, both $\gr^V M$ and $\gr^W\gr^V M$ for a Hodge module $M$ are called nearby cycles of $M$. This won't cause confusions for the remaining contents.

Our main idea is iteratively perform nearby cycles $\gr^{W_i}\gr^{V_i} M$ several times to restrict Hodge modules to intersection of divisors. However, there's an issue. For $i\neq j$, we can first take $M'\defeq \gr^{W_i}\gr^{V_i}M$, get a twisted Hodge module on $X$, and then apply a $\gr^{W'_j}\gr^{V'_j}$ nearby cycle along $D_j$, where $i\neq j$ and $V_i$ means the $V$-filtration along $D_i$, and $W_i$ is the associated weight filtration. Here $V'_j$ is the $V$-filtration for $M'$ along $D_j$, but it's not clear whether this should be the projection of the original $V$-filtration of $M$ along $D_j$. In our simple normal crossing case, this is true, by the nilpotent orbit theorem \cite{cks}. In particular, the jumping indices for $V$-filtrations are just residues of the canonical extension along the divisors.

We will need a technical lemma below. It basically says, for our purpose, we may assume the twisted Hodge module $\gr^{W_1}\gr^{V_1} M$ is of pure support, i.e., $\gr^{W_1}\gr^{V_1} M$ is the intermediate extension of the (twisted) VHS on the open part. Given a VHS on $(\Delta^*)^n$, we extend it as a Hodge module $M$. Consider the $V$-filtration along a coordinate axis, say $D_1=(z_1=0)$. The following lemma says, if $F_p M$ is the lowest piece of $M$, then $F_p \gr^{W_1}\gr^{V_1} M$ is torsion free on $D_1$. Even if $\gr^{W_1}\gr^{V_1} M$ has $\D$-module components supported on lower dimensional subvarieties of $D_1$, the lower dimensional components doesn't contribute to the lowest piece, which means $F_p \gr^{W_1}\gr^{V_1} M$ only comes from the VHS on $D_1^*=(\Delta^*)^{n-1}$. 

%\textcolor{red}{I suspect something stronger is true, i.e., $\gr^{W_1}\gr^{V_1} M$ has pure support on $D_1$.}

\begin{lemma}
	Let $(M,N,W_{\sbullet} = W(N))$ be a polarizable Hodge-Lefschetz module in the sense of Definition 3.6 in \cite{davis2024archimedeanzetafunctionssingularities} which is torsion-free on a smooth variety $Y$. Let $F_p M\neq 0$ and $F_{p-1}M=0$, then the lowest piece of Hodge modules $F_p\gr^W_r M$ for $r\geq 0$ are torsion-free on $Y$, and $F_p \gr^W_r M =0$ for $r<0$.
\end{lemma}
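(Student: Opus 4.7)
The plan is to exploit the primitive Lefschetz decomposition of $\gr^W M$ together with the fact that the nilpotent operator $N$ is a morphism of Hodge type $(-1,-1)$, so that in the ascending Hodge-filtration convention used throughout this paper, $N$ shifts the Hodge filtration up by one: $N(F_p) \subseteq F_{p+1}$.

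First I would write out the primitive Lefschetz decomposition
\[
\gr^W_r M \;=\; \bigoplus_{k \geq \max(0,-r)} N^k P_{r+2k},
\qquad P_s \defeq \ker\bigl(N^{s+1} \colon \gr^W_s M \to \gr^W_{-s-2} M\bigr),
\]
which is built into the polarized Hodge-Lefschetz structure of Definition 3.6 in \cite{davis2024archimedeanzetafunctionssingularities}. For $0 \leq k \leq s$ the map $N^k \colon P_s \to N^k P_s \subseteq \gr^W_{s-2k} M$ is an isomorphism, and because $N$ has Hodge type $(-1,-1)$ this isomorphism shifts the ascending Hodge filtration by $k$, giving
\[
F_p\bigl(N^k P_{r+2k}\bigr) \;=\; N^k\, F_{p-k} P_{r+2k}.
\]

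Next I would feed in the hypothesis $F_{p-1} M = 0$. This propagates to $F_{p-k} \gr^W_s M = 0$ for every $s$ and every $k \geq 1$, and hence to the subsheaf $F_{p-k} P_{r+2k} = 0$. Combined with the displayed shift formula, this forces $F_p(N^k P_{r+2k}) = 0$ for every $k \geq 1$, so only the $k = 0$ summand of the primitive decomposition can contribute to $F_p \gr^W_r M$. Since that summand appears precisely when $r \geq 0$, one concludes $F_p \gr^W_r M = 0$ for $r < 0$ and $F_p \gr^W_r M = F_p P_r$ for $r \geq 0$.

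For torsion-freeness when $r \geq 0$, I would invoke the canonical $\mathfrak{sl}_2$-splitting of the monodromy weight filtration carried by a polarized Hodge-Lefschetz module. Under this splitting the primitive decomposition of $\gr^W M$ lifts to a genuine direct-sum decomposition of $M$, so that $F_p M \cong \bigoplus_{r \geq 0} F_p P_r$ as subsheaves of $M$. Since $M$ is torsion-free on $Y$, the subsheaf $F_p M \subseteq M$ is torsion-free, and hence so is each direct summand $F_p P_r = F_p \gr^W_r M$. The main obstacle I anticipate is this last step: one must upgrade the primitive decomposition of the associated graded to an actual direct-sum splitting of $F_p M$ as a subsheaf of $M$, rather than only as a collection of subquotients; this is what the polarization data of Definition 3.6 of \cite{davis2024archimedeanzetafunctionssingularities} is needed for, since without it $F_p P_r$ would only be realized as a subquotient of $F_p M$ and torsion-freeness would not be automatic.
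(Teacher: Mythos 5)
Your route diverges genuinely from the paper's. The paper never writes down a Lefschetz decomposition; it works purely with local sections. The engine is \cite[Lemma~3.7]{davis2024archimedeanzetafunctionssingularities}, which for a section $m \in F_p M$ characterizes its weight \emph{inside $M$ itself}: if $m \in W_l \setminus W_{l-1}$ then $l$ is the minimum integer with $N^{l+1}m = 0$ in $M$. Given a torsion relation $f\cdot[m]=0$ in $\gr^W_l M$, the section $fm$ lies in $F_p M \cap W_{l-h}$ for some $h\geq 1$; applying the same lemma to $fm$ gives $f\cdot N^{l-h+1}m = 0$, torsion-freeness of $M$ kills $N^{l-h+1}m$, and this contradicts the minimality of $l$. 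The $r<0$ vanishing is an immediate by-product of the same lemma, since the characterization forces $l \geq 0$ for any nonzero $m \in F_p M$.

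Your reduction $F_p\gr^W_r M = F_p P_r$ (and the vanishing for $r<0$) is correct and is essentially the ``graded shadow'' of the paper's section-level statement: it follows from $F_{p-1}M=0$ plus the $(-1,-1)$-shift of $N$ exactly as you say, and it is a legitimate alternative packaging of that half of the lemma. The gap is in the torsion-freeness step, and it is real. You need the $\mathfrak{sl}_2$-splitting to be an $\mathcal{O}_Y$-linear splitting $M \cong \bigoplus_r M_r$ that is simultaneously compatible with the Hodge filtration, so that $F_p M = \bigoplus_r F_p M_r$ as subsheaves of $M$. The polarization of a Hodge--Lefschetz module in the sense of \cite{davis2024archimedeanzetafunctionssingularities} produces the Lefschetz operators and isomorphisms on the \emph{associated graded} $\gr^W M$; it does not produce a holomorphic, $\mathcal{O}_Y$-linear lift of the grading to $M$ itself, let alone one along which $F_\bullet$ decomposes. (The classical Deligne or Cattani--Kaplan--Schmid $\mathfrak{sl}_2$-splittings in the setting of nilpotent orbits involve complex conjugation via the polarization form and are not holomorphic in families; and $M$ here is only assumed torsion-free, not locally free, so even a local-linear-algebra lift is not immediate.) Without that lift, $F_p P_r$ is only a subquotient of the torsion-free sheaf $F_p M$, and torsion-freeness of a subquotient does not follow. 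The paper's use of \cite[Lemma~3.7]{davis2024archimedeanzetafunctionssingularities} is precisely what circumvents this: that lemma gives $N$-annihilation information about the honest section $m \in M$, not merely about its class in $\gr^W M$, so one can contradict torsion in $M$ directly without ever invoking a splitting. To close your gap you would either have to cite a result guaranteeing an $\mathcal{O}_Y$-linear, $F$-compatible $\mathfrak{sl}_2$-splitting for the class of modules in question, or replace the final step by the section-level argument.
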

\begin{proof}
	Given a local section $m\in F_p M$, and $m\in W_l\backslash W_{l-1}$. such that $f\cdot [m]=0\in \gr^W_l M$ for some local holomorphic function $f$. By \cite[Lemma 3.7]{davis2024archimedeanzetafunctionssingularities}, we have $l$ is the minimum such that $N^{l+1}m=0\in M$. And we know $f\cdot m\in W_{l-h}\backslash W_{l-h-1}$ for some $h\geq 1$ since $f\cdot[m]=0$. Also $f\cdot m\in F_p M$, then by that lemma again we have $N^{l-h+1} fm = f(N^{l-h+1}m)=0$. Since $M$ is assumed to be torsion-free, $N^{l-h+1}m=0$, which contradicts the minimality of $l$. Hence $F_p\gr^W_l M$ is torsion-free. 
\end{proof}

We prove the first part of the main theorem, which is the case when $C$ is not inside the boundary divisor. Also note that here we don't need to assume that $D$ is SNC, but the price is $F_p M$ may not be locally free.
\begin{theorem}
	Let $X$ be a compact complex manifold, $D$ is a divisor on $X$, $\mathcal{V}$ is a VHS on $X\backslash D$, and $M$ is the extension of $\mathcal{V}$ as a Hodge module. Given a map $f:C\rightarrow X$ where $C$ is a smooth projective curve, if $\IM(C)\nsubseteq D$, then $f^* F_p M$ is a nef coherent sheaf on $C$. 
\end{theorem}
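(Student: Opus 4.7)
The plan is to reduce to \Cref{nefness on curves} by base change. By the definition of a nef coherent sheaf recalled in the introduction, it suffices to show that for every morphism $g : C' \to C$ from a smooth projective curve and every quotient line bundle $g^* f^* F_p M \twoheadrightarrow P$, one has $\deg P \geq 0$. If $g$ is constant the pullback is a direct sum of trivial line bundles and there is nothing to check; otherwise $g$ is surjective and the composition $h \defeq f\circ g : C' \to X$ still satisfies $\IM(h)\nsubseteq D$. Setting $U \defeq h^{-1}(X\setminus D)$ yields a non-empty open subset of $C'$ whose complement is finite.

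Next I would construct a comparison map on $C'$. Restrict and pull back the VHS to obtain $h^*\V|_U$, and let $M_{C'}$ be its Hodge module extension to $C'$, which is automatically of pure support. The key input, already used in the proof of \Cref{nefness on curves}, is \cite[Lemma 2.6]{bakker2023hodgetheoreticproofhwangstheorem}: it supplies a canonical morphism $\Phi : F_p M_{C'} \to h^* F_p M$ that is an isomorphism over $U$. Composing with the given quotient $q : h^* F_p M \twoheadrightarrow P$ gives a morphism $q\circ \Phi : F_p M_{C'} \to P$ that is surjective on $U$, in particular nonzero. Its image is a nonzero subsheaf of the line bundle $P$, so it is of the form $P(-E)$ for some effective divisor $E$ on $C'$.

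The final step invokes \Cref{nefness on curves}: since $M_{C'}$ is of pure support on a smooth projective curve, $F_p M_{C'}$ is a nef vector bundle, and the surjection $F_p M_{C'} \twoheadrightarrow P(-E)$ forces $\deg P(-E) \geq 0$, hence $\deg P \geq \deg E \geq 0$. The only genuinely nontrivial step is the existence of the comparison map $\Phi$: the pullback $h^* F_p M$ may fail to be locally free along $h^{-1}(D)$---which is precisely why the theorem concludes nefness as a \emph{coherent} sheaf rather than as a vector bundle---whereas $F_p M_{C'}$ is a vector bundle on all of $C'$, and the cited lemma is what bridges the two. No analytic estimates on the Hodge metric, and no reduction to twisted Hodge modules, are needed at this stage.
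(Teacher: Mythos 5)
Your proposal is correct and takes essentially the same route as the paper: reduce to the nefness statement for Hodge modules of pure support on a curve via the comparison map of \cite[Lemma 2.6]{bakker2023hodgetheoreticproofhwangstheorem}. The only cosmetic difference is that you re-apply that lemma directly to the composite $h=f\circ g:C'\to X$, whereas the paper constructs the comparison map once on $C$ and implicitly pulls it back; both are fine, and your version makes the verification of the nefness criterion for coherent sheaves slightly more explicit.
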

\begin{proof}
	Since the image of $C$ is not contained in $D$, we can first pullback the generic VHS to a Zariski open set of $C$, and extend it to a Hodge module $M_C$ on $C$. By functoriality of Hodge modules, as long as the image of $C$ is not contained in $D$, there exists a map $F_p M_C \rightarrow f^* F_p M$ which is generically an isomorphism \cite[Lemma 2.6]{bakker2023hodgetheoreticproofhwangstheorem}. 
	
	Since $F_p M_C$ is a nef vector bundle by Theorem \ref{thm:curvetwistingtheorem}, $f^* F_p M$ is also a nef coherent sheaf on $C$. The same argument works even when $D$ is not simple normal crossing, so when $D$ is not SNC $f^* F_p M$ is still nef on $C$.
\end{proof}

Now let $X,D,\mathcal{V}$ be as at the beginning of this section, i.e., $X$ will be a compact complex manifold, $D=\sum_i D_i$ is a SNC divisor on $X$, $\mathcal{V} $ is a polarized variations of Hodge structures on $X\backslash D$. For a fixed $i$, let $-1 <\beta_{ij}\leq 0$ be eigenvalues of residues of the canonical extension $V^{>-1}$ along $D_i$. 

For completeness and convenience, we include the following lemma. It says when a coherent sheaf on a curve is filtered, and we want to give a lower bound on the degrees of its line bundle quotients, it suffices to give such lower bounds on the graded objects of the filtration. To keep the statement of the lemma concise, we denote by $d(\F)$ be the smallest degree of quotient line bundles of a coherent sheaf $\F$ on a curve. 
	\begin{lemma}\label{bounding degrees of filtrations}
		Given a coherent sheaf $\F$ on a smooth projective curve with a finite increasing filtration $L_{\sbullet} \F$ on a curve, then 
		$$d(\F)\geq \min_i \{d(\gr^L_i \F)\}.$$	
	\end{lemma}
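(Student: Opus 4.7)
The plan is to take any quotient line bundle $\F \to P$ and locate some graded piece $\gr^L_j \F$ that surjects onto a subsheaf of $P$ of degree at most $\deg P$; then the definition of $d(\gr^L_j \F)$ gives the desired bound.

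More concretely, suppose $L_0 \subset L_1 \subset \cdots \subset L_n = \F$ is the filtration, and fix a surjection $q: \F \twoheadrightarrow P$ onto a line bundle. First I would let $j$ be the smallest index for which $q|_{L_j}$ is nonzero. Then $q|_{L_{j-1}} = 0$, so $q|_{L_j}$ factors through a nonzero map $\gr^L_j \F \to P$. Since $C$ is a smooth projective curve and $P$ is a line bundle, every nonzero coherent subsheaf of $P$ is torsion-free of rank one, hence locally free of rank one; so the image of this map is of the form $P(-E)$ for some effective divisor $E$ on $C$. This gives a surjection $\gr^L_j \F \twoheadrightarrow P(-E)$, and by definition of $d$ we get $\deg P(-E) \geq d(\gr^L_j \F)$.

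Finally, since $E$ is effective, $\deg P \geq \deg P(-E) \geq d(\gr^L_j \F) \geq \min_i d(\gr^L_i \F)$. Taking the infimum over all quotient line bundles $P$ yields the stated inequality.

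There isn't really a hard step here; the only subtlety is recognizing that the image of $\gr^L_j\F$ inside $P$ is again a line bundle (so that it counts as an admissible quotient of $\gr^L_j \F$ in the definition of $d$), and this uses smoothness of $C$ together with the rank-one torsion-free local structure. The rest is bookkeeping with the filtration.
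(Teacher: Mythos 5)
Your proof is correct and follows essentially the same approach as the paper: locate the first index $j$ at which the quotient map $q\colon \F \to P$ restricts nontrivially to $L_j\F$, factor the resulting nonzero map through $\gr^L_j\F$, and use that a nonzero image in a line bundle on a smooth curve is of the form $P(-E)$. You are in fact slightly more careful than the paper's write-up, which in its first case asserts $\deg P \geq d(L_{k-1}\F)$ without spelling out that the image of $L_{k-1}\F$ in $P$ may be a proper subsheaf $P(-E)$; your argument supplies exactly this justification.
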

	\begin{proof}
		Assume $0=L_0 \F \subset L_1\F\subset \cdots \subset L_k \F=\F $. It suffices to show $d(\F)\geq \min\{d(\gr^L_k \F),d(L_{k-1}\F)\}$, then an easy induction will completes the proof of the lemma.
		
		Let $\F\rightarrow P$ be a quotient line bundle. If the composition of the morphisms $L_{k-1}\F\rightarrow \F\rightarrow P$ is nonzero, then $d(\F)\geq\deg P\geq d(L_{k-1} \F)$; otherwise the morphism $\F\rightarrow P$ factors through $\F\rightarrow \gr^L_k \F\rightarrow P$, so $P$ is also a quotient of $\gr^L_k \F$, and we have $d(\F)\geq \deg P\geq d(\gr^L_k \F)$. We have proved $d(\F)\geq \min\{d(\gr^L_k \F),d(L_{k-1}\F)\}$.
		
	\end{proof}

\begin{theorem}
	Let $C$ be a smooth projective curve. Let $f: C\rightarrow X$, such that $\IM(C)\subset D_{(k)}\defeq D_1\cap \cdots \cap D_k$, but the image of $C$ doesn't lie in any $(k+1)$ intersections of boundary divisors, and $f^* F_p M\rightarrow P$ is a quotient line bundle, then 
	$$\deg P \geq \min_{\text{over all possible }j_i} \big\{ \sum_{i=1}^k (- \beta_{ij_i}\cdot \deg f^* \strucsheaf_X(D_i)) \big\}.$$
	 
	 %When the monodromies are unipotent, we have all $\beta_{rs}$ equal to zero, this implies $F_p M$ is a nef vector bundle on $X$. Therefore we have recovered the semi-positivity theorem of Kawamata \cite{Kawamata1981}, see also \cite{fujinovmhs}, \cite{MR3167580}, \cite{semipo}.
\end{theorem}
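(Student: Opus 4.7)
The plan is to reduce the theorem to Theorem 3.2 by iteratively applying nearby cycles of $N$ along $D_1, D_2, \ldots, D_k$, and tracking how $F_p N$ restricts to the successive intersections, using the filtration computation of Section 2 together with the multi-twisted Hodge module framework.

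First, I pass to the right $\D$-module $N$ and restrict $F_p N$ to $D_1$. As spelled out in Section 2, $F_p N|_{D_1}$ carries a finite filtration whose associated gradeds are (up to Hodge filtration shift) of the form $F_p \gr^V_{\alpha_{1,j}} N$, where $\alpha_{1,j}$ ranges over the jumping indices of the $V$-filtration along $D_1$ in $[-1, 0)$. Further filtering each $\gr^V_{\alpha_{1,j}} N$ by the monodromy weight filtration $W(N_1)$ and applying Proposition 2.1, the graded pieces are $\alpha_{1,j} L_1$-twisted Hodge modules on $D_1$ with $L_1 = \strucsheaf_X(D_1)$. Lemma 5.1 ensures that each resulting lowest piece is torsion-free on $D_1$, so no lower-dimensional $\D$-module components contribute to the restriction.

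I then iterate. Since $\IM(C) \subset D_1 \cap D_2$, the same construction applies to each $\alpha_{1,j_1} L_1$-twisted Hodge module on $D_1$, now taking the $V$-filtration along $D_1 \cap D_2$ inside $D_1$. Using the multi-twisted Hodge module framework from Section 2, the operation $\gr^{W_2}\gr^{V_2}$ converts an $\alpha_{1,j_1} L_1$-twisted Hodge module into an $(\alpha_{1,j_1} L_1, \alpha_{2,j_2} L_2)$-twisted Hodge module on $D_1 \cap D_2$. The crucial compatibility --- that the $V$-filtration along $D_2$ on $\gr^{W_1}\gr^{V_1} N$ is induced from the original $V$-filtration of $N$ along $D_2$, so that the jumping indices continue to be governed by the same $\alpha_{2,j}$'s attached to $D_2$ --- follows from the SNC hypothesis together with the nilpotent orbit theorem, as remarked at the start of this section. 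Repeating for $i = 3, \ldots, k$ produces a finite filtration of $F_p N|_{D_{(k)}}$ whose graded pieces are lowest pieces of $(\alpha_{1,j_1} L_1, \ldots, \alpha_{k,j_k} L_k)$-twisted Hodge modules on $D_{(k)}$, indexed by tuples $(j_1, \ldots, j_k)$.

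Finally, I pull back to $C$, which by hypothesis embeds in $D_{(k)}$ but misses every deeper stratum, so Theorem 3.2 applies to each graded piece and yields the lower bound $\sum_i \alpha_{i,j_i} \deg f^* L_i$ on the degree of any line bundle quotient of its pullback. Lemma 5.2, applied to the filtration of $f^* F_p M$ assembled above, then forces $\deg P$ to be at least the minimum of these bounds over all tuples. Translating from $V$-filtration indices to eigenvalues of residues of the canonical extension recasts the $\alpha_{i,j_i} \deg f^* L_i$ contribution as $-\beta_{i,j_i} \deg f^* \strucsheaf_X(D_i)$, producing the stated inequality. The main obstacle is making the iteration rigorous: one must verify at each stage that $V$-filtrations along distinct SNC components commute at the graded level (via the nilpotent orbit theorem) and that Lemma 5.1 continues to apply, so the lowest piece remains torsion-free and no spurious lower-dimensional $\D$-module pieces contaminate subsequent restrictions.
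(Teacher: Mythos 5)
Your overall strategy is the paper's: restrict to $D_1$, filter by $V$-filtration (Section~2), apply Proposition~2.1 to recognize $\operatorname{gr}^{W}\operatorname{gr}^{V}$ as twisted Hodge modules, iterate across the SNC strata using nilpotent orbit compatibility, control torsion via Lemma~5.1, reduce via Lemma~5.2, and invoke Theorem~3.2. That is exactly the route the paper takes.

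However, the final numerical translation has a gap. You write that Theorem~3.2 yields the lower bound $\sum_i \alpha_{i,j_i}\deg f^*L_i$ for the graded pieces and then that ``translating from $V$-filtration indices to eigenvalues of residues'' recasts this as $\sum_i(-\beta_{i,j_i})\deg f^*\strucsheaf_X(D_i)$. Since $L_i=\strucsheaf_X(D_i)$, that rewrite requires $\alpha_{i,j_i}=-\beta_{i,j_i}$; but in fact the relation is $\beta_{i,j_i}=-\alpha_{i,j_i}-1$, i.e.\ $-\beta_{i,j_i}=1+\alpha_{i,j_i}$. The missing $+1$ per boundary divisor is not a relabeling of indices: it enters geometrically from the left--right $\D$-module conversion. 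The graded pieces of $f^*F_pM|_{D_{(k)}}=f^*(F_pN\otimes K_X^{-1})|_{D_{(k)}}$ are $F_pN_k\otimes K_X^{-1}|_{D_{(k)}}=(F_pN_k\otimes K_{D_{(k)}}^{-1})\otimes(K_{D_{(k)}}\otimes K_X^{-1}|_{D_{(k)}})$, and only the first factor is the lowest piece of the left twisted Hodge module that Theorem~3.2 applies to; the second factor, by adjunction, is $\bigotimes_i\strucsheaf_X(D_i)|_{D_{(k)}}$ and contributes $\sum_i\deg f^*\strucsheaf_X(D_i)$ to any quotient. That is exactly the shift that turns $\alpha_{i,j_i}$ into $1+\alpha_{i,j_i}=-\beta_{i,j_i}$. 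Without it, the bound would read $\sum_i\alpha_{i,j_i}\deg f^*\strucsheaf_X(D_i)$, which for unipotent monodromy ($\alpha=-1$) gives $-\sum\deg f^*\strucsheaf_X(D_i)$, failing to recover Kawamata's nefness theorem. So you need to carry the adjunction twist $K_{D_{(k)}}\otimes K_X^{-1}|_{D_{(k)}}\cong\bigotimes_i\strucsheaf_X(D_i)|_{D_{(k)}}$ explicitly through the argument rather than absorbing it into a ``translation of indices.''
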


\begin{proof}
	If $\IM(C)\subset D$, to illustrate the idea, assume $f: C\rightarrow D_1\hookrightarrow X$, and the image of $C$ doesn't lie in the intersections of boundary divisors. Remember $N=M\otimes K_X$ is the corresponding right $\D$-module. Consider the $V$-filtration of $N$ along $D_1$. As explained in section 2,  $F_p N|_{D_1} $ is filtered by coherent subsheaves 
		$$\frac{F_pV_{\alpha_1}N}{F_p V_{<-1}N},\cdots,\frac{F_pV_{\alpha_k}N}{F_p V_{<-1}N},$$
		where $-1\leq \alpha_1<\cdots<\alpha_k<0$ are related to the monodromy along $D_1$; in fact, $ (-\alpha_{i}-1)$ are exactly the eigenvalues of residues of canonical extension of $\V$ along $D_1$. The graded quotients of this filtration by subsheaves are $F_p \text{gr}^V_{\alpha_i}$, which is the pushforward of a coherent sheaf from $D_1$ to $X$. By \cite[Proposition 3.12]{schnell2024highermultiplierideals}, we know $\gr^W_l \gr^V_\alpha N$ is an $\alpha L$-twisted Hodge module on $X$ supported on $D_1$, for $\alpha\in [-1,0)$, $l\in \Z$, where $L=\strucsheaf_X(D_1)$. Now by Lemma \ref{bounding degrees of filtrations}, it remains to understand the degree lower bounds of quotient line bundles of the lowest piece of twisted Hodge modules $F_p \gr^W_l\gr^{V}_{\alpha_i} N\otimes K_{X}^{-1}$ on $D_1$, since $F_p \gr^W_l\gr^{V}_{\alpha_i} N\otimes K_{X}^{-1}$ are subquotients of $F_p M|_{D_1}$.
		
		By Theorem \ref{thm:curvetwistingtheorem}, every line bundle quotient $P_{\alpha_i,l}$ of $$f^* (F_p \gr^W_l\gr^V_{\alpha_i} N \otimes K_{X}^{-1}) = f^* (F_p \gr^W_l\gr^V_{\alpha_i} N\otimes K_{D_1}^{-1}\otimes (K_{D_1}\otimes K_{X}^{-1}|_{D_1})$$ has $\deg P_{\alpha_i, l} - \deg f^*(K_{D_1}\otimes K_{X}^{-1}|_{D_1}) \geq \alpha_i \cdot \deg f^* L$, hence
		$$\deg P_{\alpha_i,l}\geq (1+\alpha_i)\cdot \deg f^*\strucsheaf_X(D_1) = -\beta_i \deg \strucsheaf_X(D_1).$$
		Here the the appearance of canonical bundles of $X$ and $D_1$ is due to left-right conversions of $\D$-modules, since our convention for taking twisted Hodge modules is using right $\D$-modules \cite[Remark 3.3]{schnell2024highermultiplierideals}, but the object we are interested in is the lowest piece of the left Hodge module.
				
		Now we return to our original object, $f^* F_p M$. It's filtered by some coherent subsheaves, and the quotient line bundles of the subquotients have degree lower bounds in terms of $\deg f^*\strucsheaf_X(D_1)$ and monodromy along $D_1$. Then by Lemma \ref{bounding degrees of filtrations}, the degree of the quotient line bundles $f^* F_p M\rightarrow P$ is bounded below by the minimum of all the lower bounds, if given a map $f: C\rightarrow D_1\hookrightarrow X$,
		$$\deg P\geq \min_i \{-\beta_i \cdot \deg f^*\strucsheaf_X(D_1)\}.$$
		
	In the general case, if $\IM(C)\subset D_{(k)}\defeq D_1\cap \cdots \cap D_k$, but the image of $C$ doesn't lie in any $(k+1)$ intersections of boundary divisors, then
	we take nearby cycles along $D_1$ up to $D_k$, and the object we get is an
	$$(\alpha_{1}\cdot \strucsheaf_X(D_1)|_{D_{(k)}},\cdots,\alpha_{k}\cdot \strucsheaf_X(D_k)|_{D_{(k)}})\text{-twisted Hodge module } N_k.$$
	By assumption $\IM(C)$ is not contained in the singular locus of this (twisted) Hodge module, we can first pullback the (twisted) VHS to $C$, then extend it as a multiply twisted Hodge module $N'$ on $C$. Then there exists a map $F_p N'\otimes K_C^{-1}\rightarrow f^* (F_p N_k\otimes K_{D_{(k)}}^{-1} )$ as in \cite[Lemma 2.6]{bakker2023hodgetheoreticproofhwangstheorem} which is generically an isomorphism. The quotient line bundle $f^* (F_p N_k\otimes K_{X}^{-1} ) \rightarrow L$, by an application of Theorem \ref{thm:curvetwistingtheorem} in the multiply twisted case, has a degree lower bound $\sum \alpha_i f^*\strucsheaf_X(D_i) + f^*(K_{D_{(k)}} \otimes K_X^{-1}) = \sum (1+\alpha_i)f^* \strucsheaf_X(D_i)$, which implies our theorem.
\end{proof}

Now we state two corollaries. The first one is originally due to Kawamata \cite{Kawamata1981}. His proof uses Cattani-Kaplan-Schmid's results about VHS, and our proof of the main theorem is essentially in the same spirit of Kawamata, by specializing the lowest piece to the intersection of divisors. It's also proved in \cite{semipo} using asymptotics of the Hodge metric, and a current approximation theorem in complex analytic geometry. See also \cite{fujinovmhs} and \cite{MR3167580}. The second corollary is new. In the case of abelian varieties, it is also proved in \cite[Corollary 20.2]{pps2017}, using methods of generic vanishing theory. They actually proved something stronger in that case using special geometry of abelian varieties, that $F_p M$ becomes globally generated possibly after some isogenies.

\begin{corollary}
	If $D$ is a normal crossing divisor, and the monodromies along each component of $D$ are unipotent, then $F_p M$ is a nef vector bundle on $X$.
\end{corollary}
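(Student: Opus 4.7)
The plan is to derive the corollary directly from part (b) of the main theorem, together with the standard local-freeness statement in the unipotent setting.

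First I would observe that when all monodromies are unipotent, the residues of the canonical extension $\mathcal{V}^{>-1}$ along each component $D_i$ are nilpotent, so every eigenvalue $\beta_{ij}$ appearing in the statement of the main theorem vanishes. Plugging this into part (b) gives
$$\deg P \;\geq\; \min_{j_i} \sum_{i=1}^k \bigl(-\beta_{ij_i}\cdot \deg f^* \strucsheaf_X(D_i)\bigr) \;=\; 0$$
for every map $f: C\rightarrow X$ from a smooth projective curve whose image lies in some stratum $D_{(k)}$ but no smaller one, and for every quotient line bundle $f^* F_p M \rightarrow P$. Together with part (a), which covers curves not contained in $D$, this gives $\deg P \geq 0$ unconditionally. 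By the criterion recalled at the end of the introduction, $F_p M$ is a nef coherent sheaf on $X$.

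The only remaining point is to upgrade \emph{nef coherent sheaf} to \emph{nef vector bundle}, i.e., to show that $F_p M$ is locally free on all of $X$ under the unipotent hypothesis. This is standard: since the local monodromies are unipotent, $\mathcal{V}^{>-1}$ is a vector bundle with logarithmic connection with nilpotent residues, and Schmid's nilpotent orbit theorem implies that the Hodge filtration on the generic VHS extends across $D$ to a filtration of $\mathcal{V}^{>-1}$ by holomorphic sub-bundles. The lowest piece $F_p M = j_* F_p \mathcal{V} \cap \mathcal{V}^{>-1}$ coincides with the lowest term of this extended filtration, hence is a sub-bundle of $\mathcal{V}^{>-1}$.

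The main obstacle is really just the local-freeness claim, since the numerical inequality is automatic once $\beta_{ij}=0$ is substituted. Both ingredients are classical: the bundle statement is contained in \cite{Kawamata1981} and can alternatively be read off from the asymptotics of the Hodge metric via the nilpotent orbit theorem, while the nefness part is purely formal from the main theorem. No further analysis is needed.
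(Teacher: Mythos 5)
The proposal handles only the SNC case and misses the actual subtlety of this corollary. The corollary is stated for a \emph{normal crossing} divisor, not a simple normal crossing one, and the paper's proof explicitly makes this distinction: when $D$ is NC but not SNC, a component $D_i$ may have self-intersections, the strata $D_{(k)}=D_1\cap\cdots\cap D_k$ used in part (b) of the main theorem are not defined, and the main theorem does not apply directly. Your substitution $\beta_{ij}=0$ into part (b) is fine and dispatches the SNC case immediately, but that is only half the argument.

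The paper deals with the general NC case by blowing up: take $\mu\colon \tilde X\to X$ making $\mu^*D$ SNC, pull back the canonical extension $V$ to $\tilde V$ on $\tilde X$, check by a local calculation that $\tilde V$ still has nilpotent residues (so it \emph{is} the canonical extension on $\tilde X$, by uniqueness), and then use $F_p\tilde M=F_p\tilde V=\mu^*F_pV=\mu^*F_pM$ together with the SNC case to conclude $\mu^*F_pM$ is nef, hence $F_pM$ is nef. Your sketch contains none of this reduction, which is the genuine content of the corollary beyond the numerical substitution. The paragraph you devote to local-freeness of $F_pM$ is correct but is the easy part: in the unipotent setting $F_pM=F_pV$ is a subbundle of the canonical extension, and the paper essentially takes this for granted. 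To repair the argument you would need to add the blow-up step and the identification of $\mu^*V$ with the canonical extension on $\tilde X$.
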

\begin{proof}
	When $D$ is simple normal crossing, because all $\beta_{ij}=0$ in the case of unipotent monodromy, then the main theorem implies all quotient line bundles have non-negative degrees.
	
	If the divisor is only assumed to be normal crossing, then each component $D_i$ could have self-intersections. We blow up all the intersections of divisors and get $\mu: \tilde{X}\rightarrow X$, to make $\mu^* D$ have simple normal crossing support. Pullback the canonical extension $V$ on $X$ to $\tilde{X}$, call it $\tilde{V}$, and $\tilde{V}$ is a locally free sheaf with a logarithmic connection along $\mu^{-1}(D)$, extending the VHS on $\tilde{X}-\mu^{-1}(D) $. Since the VHS on $X-D$ is unipotent, the residues of the canonical extension are nilpotent. By a local computation of blowing ups, $\tilde{V}$ also has nilpotent residues. By uniqueness \cite[Theorem 5.2.17]{htt}, this implies $\tilde{V}$ is the canonical extension on $\tilde{X}$. Denote the corresponding Hodge module on $\tilde{X}$ by $\tilde{M}$, we have $F_p \tilde{M} = F_p \tilde{V} $ by \cite[exercise 11.2]{overview}. Then
	$$F_p \tilde{M} = F_p \tilde{V}  = \mu^* F_p V = \mu^* F_p M.$$
	By the first paragraph, $F_p \tilde{M}$ is nef, then $F_p M$ is also nef.
\end{proof}

Our main theorem suggest the nefness is also related to the positivity of boundary divisors. In the surface case we can also relax the condition of simple normal crossing boundary.

\begin{corollary}
	\begin{itemize}
		\item[(a)] If each component of $D$ is a nef divisor, and $D$ is simple normal crossing, then $F_p M$ is a nef vector bundle on $X$. 
		
		\item[(b)] If $X$ is a surface, and each $D_i$ is a nef divisor (could be singular and $D$ can be not SNC), then $F_p M$ is a nef coherent sheaf on $X$.
	\end{itemize}
	
\end{corollary}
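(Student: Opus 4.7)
The plan rests on two observations: each $\beta_{ij}\in(-1,0]$, so $-\beta_{ij}\geq 0$; and a nef divisor $D_i$ has $f_*[C]\cdot D_i\geq 0$ for any map $f:C\to X$ from a smooth projective curve, so $\deg f^*\strucsheaf_X(D_i)\geq 0$. When $X$ is a surface and $f(C)=D_i$, this reads $d\cdot D_i^2\geq 0$, which forces $D_i^2\geq 0$ in particular.

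For part (a) I would apply the main theorem directly. Given any $f:C\to X$ and any quotient line bundle $f^*F_p M\to P$: if $\IM f\not\subset D$, then $\deg P\geq 0$ by case (a) of the main theorem. If $\IM f\subset D_{(k)}$, the estimate
\[
\deg P\geq \min_{j_i}\sum_{i=1}^k(-\beta_{ij_i})\,\deg f^*\strucsheaf_X(D_i)
\]
is a minimum of sums of non-negative terms, so $\deg P\geq 0$. This shows $F_p M$ is nef, and in the SNC setting it is moreover locally free as a coherent sub-bundle of the Deligne canonical extension, hence a nef vector bundle.

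For part (b), $D$ need not be SNC, so I would take an embedded resolution $\mu:\tilde X\to X$ with $\tilde D\defeq \mu^{-1}(D)_{\mathrm{red}}$ SNC (with strict transforms $\tilde D_i$ and exceptional components $E_k$), and extend the pullback VHS to a Hodge module $\tilde M$ on $\tilde X$. Any $f:C\to X$ lifts uniquely to $\tilde f:C\to\tilde X$ by the valuative criterion, and $f^*F_p M=\tilde f^*\mu^*F_p M$, so nefness of $F_p M$ on $X$ is equivalent to nefness of $\mu^*F_p M$ on $\tilde X$. I would split into three cases by $\tilde f(C)$: if $\tilde f(C)\not\subset\tilde D$, then $\IM f\not\subset D$ and case (a) of the main theorem applies (its proof does not require SNC); if $\tilde f(C)\subset E_k$ for some exceptional $E_k$, then $f$ is constant and the pullback of $F_p M$ is trivial; the remaining case is $\tilde f(C)\subset\tilde D_i$, i.e.\ $f(C)=D_i$.

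In this last case the plan is to rerun the main theorem's nearby-cycle argument, but taking twistings with respect to the pullback line bundle $\mu^*\strucsheaf_X(D_i)$ rather than $\strucsheaf_{\tilde X}(\tilde D_i)$. The desired estimate
\[
\deg P\geq \min_j(-\beta_{ij})\,\deg f^*\strucsheaf_X(D_i)=\min_j(-\beta_{ij})\cdot d\cdot D_i^2\geq 0
\]
would then close the argument, where $d$ is the degree of $C\to D_i$. The main obstacle is to justify this substitution rigorously: on $\tilde X$ the reducible divisor $\mu^*D_i=\tilde D_i+\sum_k m_{ik}E_k$ must replace the single SNC component $\tilde D_i$ in the twisted-Hodge-module formalism of Section~2, and one has to compare $F_p\tilde M$ with $\mu^*F_p M$ along the exceptional components. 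That $\tilde D_i$ meets each $E_k$ transversely with non-negative intersection is what should give these discrepancies the correct sign to preserve the bound.
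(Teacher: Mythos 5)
Part (a) of your proposal is correct and is exactly what the paper does: the bound in the main theorem is a minimum of sums $\sum_i(-\beta_{ij_i})\deg f^*\strucsheaf_X(D_i)$, and each summand is nonnegative because $-\beta_{ij}\in[0,1)$ and each $\strucsheaf_X(D_i)$ is nef, so every quotient line bundle of $f^*F_pM$ has degree $\geq 0$.

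For part (b) your route diverges from the paper's and is left with a real gap. The paper does \emph{not} pass to an embedded resolution; it argues directly on $X$ by observing that the proof of the main theorem survives in the surface case with a possibly singular boundary curve $D_i$, because taking nearby cycles of $M$ along a singular curve in a surface can only introduce additional twisted Hodge module components supported at the singular points of $D_i$, and $0$-dimensional components cannot contribute quotient line bundles when pulled back to a curve. Your resolution strategy $\mu:\tilde X\to X$ runs into two obstructions you identify but do not close. First, the main theorem applied on $\tilde X$ produces lower bounds in terms of $\deg\tilde f^*\strucsheaf_{\tilde X}(\tilde D_i)$ and $\deg\tilde f^*\strucsheaf_{\tilde X}(E_k)$, and the strict transform $\tilde D_i$ will typically fail to be nef even if $D_i$ was (indeed $\tilde D_i^2=D_i^2-\sum_k m_{ik}^2$ can easily be negative), so the straightforward application of the main theorem on $\tilde X$ gives nothing; the ``substitution'' of $\mu^*\strucsheaf_X(D_i)$ for $\strucsheaf_{\tilde X}(\tilde D_i)$ is not a cosmetic relabeling, because the twisting that Proposition~3.12 of the cited reference attaches to $\gr^W\gr^V$ along $\tilde D_i$ is by $\strucsheaf_{\tilde X}(\tilde D_i)$ and the residue data along $\tilde D_i$, not by $\mu^*\strucsheaf_X(D_i)$, and reconciling the two requires tracking extra twists by $\strucsheaf(\sum_k m_{ik}E_k)$ that you have not analyzed. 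Second, you implicitly use $\mu^*F_pM=F_p\tilde M$ (or at least a map between them compatible with your bounds); the paper proves that identity only in the unipotent case, and in the quasi-unipotent case the canonical extensions on $X$ and $\tilde X$ genuinely differ, so the comparison needs its own argument. The paper's direct argument on $X$ avoids both issues.
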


\begin{proof}
	For (a), this is because $-\beta_{ij}\geq 0$, and each $\strucsheaf_X(D_i)$ is a nef line bundle. For (b), the same conclusion of the main theorem is still true for surfaces, with possibly non-SNC boundary divisor $D$, since taking nearby cycles along a singular curve on a surface can only have extra components of twisted Hodge modules at the singular points of the curve, so doesn't really contribute to quotient line bundles. 
\end{proof}

%\begin{corollary}
%	If $D$ is only a normal crossing divisor, and each component is nef, is $F_p M$ nef????????
%\end{corollary}
%\begin{proof}
%	I think this is definitely not true. Need to find an example, maybe base is $\proj^3$, outside a hypersurface having a transversal self-intersection, it's nef since $\proj^3$ has Picard rank one. There is a VHS, after blow up the intersection, the lowest piece can have a negative quotient on the exceptional divisor. 
%\end{proof}

We prove the last proposition in the introduction. Note that here we don't assume $D$ is simple normal crossing. 

\begin{proposition}
	Let $M$ be a Hodge module on a smooth projective variety $X$ of dimension $n$, which is the intermediate extension of a generically defined VHS on $X$. Given a map from a curve $f:C\rightarrow X$ and suppose the image of $C$ is a smooth complete intersection curve, then $f^*F_p M$ is a nef coherent sheaf on $C$. 
\end{proposition}
\begin{proof}
	Suppose $C$ is the transversal intersections of ample divisors $H_1,\cdots,H_{n-1}$. Let $F_p M$ be the lowest piece of the Hodge module we are considering. $F_p M|_C$ is filtered some many coherent subsheaves, whose subquotients are exactly equal to the lowest piece of some twisted Hodge modules supported on $C$ (produced by taking nearby cycles). By \cref{bounding degrees of filtrations}, it suffices to give degree lower bounds for these lowest piece of twisted Hodge modules restricted to $C$.   
	
	We do nearby cycles for $n-1$ times for $H_1$ to $H_{n-1}$, and get a $(\alpha_1 \cdot H_1,\cdots,\alpha_{n-1}\cdot H_{n-1})$-twisted right Hodge module $N$ on $X$, with support contained on $C$, where $\alpha_i\in [-1,0)$ are real numbers. We may assume $N$ has pure support $C$, since if the support is a point then it won't have quotient line bundles. Let $F_p N\otimes K_X^{-1}|_C\rightarrow P$ be a quotient line bundle on $C$. The factor $K_X^{-1}$ shows up since $N$ is a right twisted Hodge module. By our previous result Theorem \ref{thm:curvetwistingtheorem}, we have 
	$$F_p N\otimes K_{C}^{-1}\rightarrow P\otimes (K_{C}^{-1}\otimes K_X|_C), \quad \deg P\otimes (K_{C}^{-1}\otimes K_X|_C) \geq \sum_{i}\alpha_i\cdot \deg H_i|_{C}.$$
	Since $K_C\otimes K_{X}^{-1}|_C \cong \otimes_i \strucsheaf_X(H_i)|_C$, we have $$\deg P \geq \sum_i (1+\alpha_i) \cdot \deg\strucsheaf_X(H_i)|_C\geq 0.$$ 
	For curves mapping to a smooth complete intersection curve, the proof is similar, just pullback the generic twisted VHS on the complete intersection curve, and use Theorem \ref{thm:curvetwistingtheorem}.
\end{proof}

\begin{remark}
    The above proposition is really a local statement around the curve. If normal bundle of $C$ is nef and splits into line bundles, and $T_X|_C \cong T_C\oplus N_{C/X}$ (hence it has a tubular neighborhood in $X$ isomorphic to a neighborhood of the zero section in its normal bundle) , then by the same proof as above $F_p M|_C$ is nef. 

In the above proposition, we cared neither  about the order of taking nearby cycles, nor about taking nearby cycles give a twisted Hodge module with strict support or not, since in the end the coefficients of the twistings turn out to be positive. However, during the proof of the main theorem, it's important to know the lowest piece of nearby cycles comes from a twisted Hodge module with strict support, since we want to know explicitly the coefficients of the lower bound, which are eigenvalues of residues of Deligne canonical extension.
\end{remark}

\section{Examples}

We show that our lower bound is sharp by some examples. 
\begin{example}
	The example in \cite{fujinovmhs} also works when dimension is greater than two. Take $\pi:X\defeq\proj(\strucsheaf_{\proj^n}\oplus\strucsheaf_{\proj^n}(2)) \rightarrow \proj^n$. We have two sections $E,G$. $E$ corresponds to the quotient $\strucsheaf_{\proj^n}\oplus\strucsheaf_{\proj^n}(2)\rightarrow \strucsheaf_{\proj^n}$, and $G$ corresponds to $\strucsheaf_{\proj^n}\oplus\strucsheaf_{\proj^n}(2)\rightarrow \strucsheaf_{\proj^n}(2)$. A simple computation shows $\strucsheaf_X(E+G)$ is isomorphic to $2L\defeq 2(\strucsheaf_X(1) - \pi^* \strucsheaf_{\proj^n}(1))$. Let $\tilde{X}$ be the branched cover of $X$ of degree $2$, branched over $E,G$. We get $f_*\omega_{\tilde{X}/X}\cong \strucsheaf_X\oplus L$. Since $L|_{E}\cong \strucsheaf_{\proj^n}(-1)$, the degree of $L$ restricted to any line in $E\cong \proj^n$ is equal to $-1$. This shows our estimate is still sharp in higher dimensions.
\end{example}

In the above example, two hypersurfaces are not intersecting each other. Next we present an example of a 3-fold $X$, where the lower bound can still be achieved on a curve which is the intersection of two smooth surfaces in $X$. The same idea can be used to produce more examples in higher dimensions, indicating our lower bound can be achieved on all intersections of boundary divisors and in all dimensions.

\begin{example}
	Let $Y$ be a smooth projective 3-fold, $D_1, D_2$ are smooth hypersurfaces in $Y$, intersecting transversely at a curve $C$. Suppose $\deg \strucsheaf_{Y}(D_1)|_C <0$, $\deg \strucsheaf_{Y}(D_2)|_C <0$, and there exists a line bundle $L$ on $Y$ such that $L^N\cong  \strucsheaf_{Y}(D_1+D_2)$ for some positive integer $N>1$. 
	
	Consider a branched cover of $Y$, by taking the $N$-th roots of the section of $L^N$ vanishing along $D_1+D_2$. Resolving the singularity of this branched cover, we get another smooth projective 3-fold $X$, with a map $g:X\rightarrow Y$ étale over $Y-(D_1+D_2)$, and $$g_*\omega_{X/Y}\cong \oplus_{i=0}^{N-1} L^i.$$ 
	The eigenvalues of residues of the canonical extension of the VHS induced by $g$ on $Y-(D_1+D_2)$ along $D_1,D_2$ are $-\frac{N-1}{N},-\frac{N-2}{N},\cdots,\frac{-1}{N},0$. By the main theorem, any line bundle quotient of $(g_*\omega_{X/Y})|_C$ has degree greater than or equal to
	$$\min_{a,b\in [0,N-1)} \big\{ \frac{a}{N}\cdot \deg \strucsheaf_X(D_1)|_C + \frac{b}{N}\cdot \deg \strucsheaf_X(D_2)|_C \}.$$
	By assumption, since $\deg \strucsheaf_{Y}(D_1)|_C <0$ and $\deg \strucsheaf_{Y}(D_2)|_C <0$, the above expression is equal to $$\frac{N-1}{N}\cdot \deg \strucsheaf_X(D_1+D_2)|_C = \deg L^{N-1}|_{C}<0.$$
	This shows that the quotient line bundle $L^{N-1}|_C$ of $(g_*\omega_{X/Y}|_C)$ achieves the equality case of the lower bound, and $g_*\omega_{X/Y}$ is not nef.
	
\end{example}

\bibliographystyle{alpha}
\bibliography{references.bib}

\end{document}